\documentclass[a4paper,11pt]{amsproc}

\usepackage{amssymb,latexsym, amsmath}
\usepackage[dvips]{graphicx}
\usepackage{amscd}

\newtheorem{remark}{Remark}
\newtheorem{theorem}{Theorem}
\newtheorem{proposition}{Proposition}

\newtheorem{definition}{Definition}

\newtheorem{lemma}{Lemma}

\newcommand{\R}{\mathbb{R}}

\DeclareMathOperator{\orth}{\mathrm{orth}}

\textwidth 5.5 truein \oddsidemargin .5 truein \evensidemargin .5
truein \topmargin 0 truein \textheight 9in

\begin{document}

\title[NONCOMMUTATIVE INTEGRABILITY  IN CONTACT
GEOMETRY ]{NONCOMMUTATIVE INTEGRABILITY AND ACTION-ANGLE VARIABLES
IN CONTACT GEOMETRY}

\keywords{noncommutative integrability, $\alpha$-complete
foliations, isotropic foliations, contact flows, action-angle
coordinates}

\author[B. Jovanovi\'c]{\bfseries Bo\v zidar Jovanovi\'c}

\address{
Mathematical Institute \\
Serbian Academy of Sciences and Arts \\
Kneza Mihaila 36, 11000 Belgrade\\
Serbia} \email{bozaj@mi.sanu.ac.rs}

\subjclass{53D35, 37J35, 37J55}

\maketitle

\begin{abstract}
We introduce a notion of the noncommutative integrability within a
framework of contact geometry.
\end{abstract}

\section{Introduction}

\subsection{}
In Hamiltonian mechanics solving by quadratures is closely related
to the regularity of dynamics that is described in the
Arnold-Liouville theorem. A Hamiltonian system  on
$2n$-dimensional symplectic manifold $M$ is called {\it
integrable} if it has $n$ smooth Poisson-commuting, almost
everywhere independent integrals $f_1,f_2,\dots,f_n$.  Regular
compact connected invariant manifolds of the system are Lagrangian
tori. Moreover, in a neighborhood of any torus, there exist
canonical action-angle coordinates
$(\varphi,I)=(\varphi_1,\dots,\varphi_n,I_1,\dots,I_n)$, integrals
$f_i$ depend only on actions $I$ and the flow is translation in
$\varphi$ coordinates \cite{Ar}.

Therefore, an integrable Hamiltonian system can be considered as a
toric Lagrangian fibration $ \pi: M\to W$ (see Duistermaat
\cite{Du}). This approach is reformulated  to contact manifolds
$(M,\mathcal H)$ by Banyaga and Molino \cite{BM}. Instead of a
toric Lagrangian fibration, one consider an invariant toric
fibration transversal to the contact distribution $\mathcal H$,
such that intersection of tori and $\mathcal H$ is a Lagrangian
distribution with respect to the conformal class of the symplectic
structure on $\mathcal H$ (see Section 5).

Slightly different notion of a contact integrability is given
recently by  Khesin and Tabachnikov \cite{KT}. They defined
integrability in terms of the existence of an invariant foliation
$\mathcal F$, called a co-Legendrian foliation (here we refer to
$\mathcal F$ as a pre-Legendrian foliation). $\mathcal F$ is
transversal to $\mathcal H$, $\mathcal G=\mathcal F\cap \mathcal
H$ is a Legendrian foliation of $M$ with an additional property
that on every leaf $F$ of $\mathcal F$, the foliation $\mathcal
G\vert_F$ has a holonomy invariant transverse smooth
measure.\footnote{Through the paper we use the same notation for
foliations and their integrable distributions of tangent spaces.}
It turns out that this condition implies the existence of a global
contact form $\alpha$ (see \cite{KT}) and $\mathcal G$ is a
$\alpha$-complete Legendrian foliation studied by Libermann
\cite{L2} and Pang \cite{P}. Recall that a foliation $\mathcal F$
is {\it $\alpha$-complete} if for any pair $f_1,f_2$ of first
integrals of $\mathcal F$ (where $f_i$ may be a constant), the
Jacobi bracket $[f_1,f_2]$ is also a first integral of $\mathcal
F$ (eventually a constant).

Due to the presence of symmetries, many Hamiltonian systems have
more than $n$ non-commuting integrals. Illustrative examples are
$G$-invariant geodesic flows on homogeneous spaces \cite{BJ, Jo1}.
An appropriate framework for the study of these systems is
noncommutative integrability introduced by Nehoroshev \cite{N} and
Mishchenko and Fomenko \cite{MF} (see also \cite{BJ, Zu, Jo1,
LMV}). Here we recall the Nehoroshev formulation: a Hamiltonian
system on $2n$-dimensional symplectic manifold $M$ is {\it
noncommutatively integrable} if it has $2n-r$ almost everywhere
independent integrals $f_1,f_2,\dots,f_{2n-r}$ and $f_1,\dots,f_r$
commute with all integrals
$$
\{f_i,f_j\}=0, \quad i=1,\dots,2n-r, \quad j=1,\dots,r.
$$
Regular compact connected invariant manifolds of the system are
isotropic tori. In a neighborhood of a regular torus, there exist
canonical {\it generalized action-angle coordinates} such that
integrals $f_i$, $i=1,\dots,r$ depend only on actions and the flow
is translation in angle coordinates.

One of the basic examples of contact manifolds are unit co-sphere
bundles $SQ\subset T^*Q$ of Riemannian manifolds $(Q,g)$. The
restriction of a geodesic flow to $SQ$ is a contact flow of the
Reeb vector field of the associated contact form. It is clear that
noncommutatively integrable geodesic flows, considered as Reeb
vector flows, have a geometrical structure that need to be
described by a noncommutative variant of integrability.

\subsection{}
We introduce an appropriate concept of a contact noncommutative
integrability.

In the first part of the paper (sections 3 and 4) foliations on
contact manifolds $(M,\mathcal H)$ are considered. We refer to a
foliation $\mathcal F$ as  {\it pre-isotropic} if it is
transversal to $\mathcal H$ and $\mathcal G=\mathcal F\cap\mathcal
H$ is an isotropic subbundle of $\mathcal H$.

Let $\mathcal F$ be a pre-isotropic foliation containing the Reeb
vector field $Z$ on a co-oriented contact manifold $(M,\alpha)$.
The foliation $\mathcal F$ is $\alpha$-complete if and only if
$\mathcal E$ is completely integrable, where $\mathcal E=\mathcal
F^\perp$ is the {\it pseudo-orthogonal distribution} of $\mathcal
F$ and we have a flag of foliations $\mathcal G \subset \mathcal F
\subset\mathcal E$. Furthermore, each leaf of $\mathcal G$ and
$\mathcal F$ has an affine structure (Theorem 2).

Thus, if $\mathcal F$ has compact leaves, they are tori. Locally,
in an invariant neighborhood of any leaf, the foliation $\mathcal
F$ can be seen as a fibration over some base manifold. Also,
affine translations provide an Abelian Lie algebra of contact
transformations with orbits that coincide with $\mathcal F$.

Next, we consider a pre-isotropic foliation $\mathcal F$ on a
contact manifold $(M,\mathcal H)$ with the mentioned properties of
$\alpha$-complete pre-isotropic foliations: $\mathcal F$ is
defined via submersion $\pi: M\to W$ and it is given an Abelian
Lie algebra of contact symmetries $\mathcal X$ with orbits equal
to $\mathcal F$. We refer to a triple $(M,\mathcal H,\mathcal X)$
as a {\it complete pre-isotropic contact structure}.

For a given complete pre-isotropic structure $(M,\mathcal
H,\mathcal X)$, locally, there always exist an invariant contact
form $\alpha$ such that $\mathcal F$ is $\alpha$-complete (Theorem
3). Notice, if $\mathcal F$ has the maximal dimension (i.e, it is
pre-Legendrian) and fibers of $\pi$ are connected, $(M,\mathcal
H,\mathcal X)$ is a {\it regular completely integrable contact
structure} studied by Banyaga and Molino \cite{BM}. The analysis
above lead us to the following definition (section 5).

Let $X$ be a contact vector field. We shall say that a contact
equation
\begin{equation}\label{CE}
 \dot x=X
 \end{equation}
 is {\it contact noncommutatively integrable} if there is an Abelian Lie algebra of contact symmetries
$\mathcal X$, an open dense set $M_{reg}\subset M$, and a
submersion $\pi:M_{reg}\to W$ such that
\begin{itemize}
\item[(i)] the contact vector field $X$ is tangent to the fibers
of $\pi$;
\item[(ii)] $(M_{reg},\mathcal H,\mathcal X)$ is a
complete pre-isotropic contact structure.
\end{itemize}

Analogues to the Mishchenko--Fomenko--Nehoroshev theorem, we prove
that in a neighborhood of any invariant torus, there exist
canonical generalized contact action-angle coordinates and
\eqref{CE} is a translation in angle variables, where frequencies
depend only on actions and in which the contact distribution
$\mathcal H$ is presented by the canonical 1-form $\alpha_0$
(Theorem 4).

For the co-oriented case, we also formulate the statement
involving only integrals of a motion (Theorem 5): a contact
equation \eqref{CE} is noncommutatively integrable if it possesses
a collection of first integrals $f_1,f_2,\dots,f_{2n-r}$, that are
all in involution with the constant functions and with the first
$r$ integrals:
\begin{equation*}
[1,f_i]=0, \quad [f_i,f_j]=0, \quad i=1,\dots,2n-r, \quad
j=1,\dots, r.
\end{equation*}

Note that,  besides integrable geodesic flows on homogeneous
spaces restricted to the unit co-sphere bundles \cite{BJ, Jo1}, a
natural class of examples of contact flows integrable in a
noncommutative sense are the Reeb flows on $K$-contact manifolds
$(M^{2n+1},\alpha)$ where the rank of the manifold is less then
$n+1$ (see Yamazaki \cite{Y1} and Lerman \cite{Le2}).

Finally in section 6, we consider a complete pre-isotropic contact
structure $(M,\alpha,\mathcal X)$ of the {\it Reeb type}, that is
$\mathcal H$ is defined by a global $\mathcal X$-invariant form
$\alpha$ and the Reeb vector field of $\alpha$ is $\pi$-vertical.
Note that $(M,\mathcal H,\mathcal X)$ can be a complete
pre-isotropic structure with a global $\mathcal X$-invariant
contact form $\alpha$, which is not of the Reeb type (see
Proposition 1). On the other hand, the invariant foliation
$\mathcal F$ of a complete pre-isotropic contact structure
$(M,\alpha,\mathcal X)$ of the Reeb type is $\alpha$-complete
(Proposition 2). We describe the transition functions between the
contact action-angles coordinates (Proposition 3) and prove the
statement on the existence of global action-action variables in
the case when $\pi: M\to W$ is a trivial principal  $\mathbb
T^{r+1}$-bundle (Theorem 6).

\section{Contact manifolds and the Jacobi bracket}

\subsection{}
In the definitions and notations we mostly follow Libermann and
Marle \cite{LM}.

A {\it contact form} $\alpha$ on a $(2n+1)$-dimensional manifold
$M$ is a Pfaffian form satisfying $\alpha\wedge(d\alpha)^n\ne0$.
By a {\it contact manifold} $(M,\mathcal H)$ we mean a connected
$(2n+1)$-dimensional manifold $M$ equipped with a nonintegrable
{\it contact} (or {\it horizontal}) {\it distribution} $\mathcal
H$, locally defined by a contact form: $\mathcal
H\vert_U=\ker\alpha\vert_U$, $U$ is an open set in $M$.

Two contact forms $\alpha$ and $\alpha'$ define the same contact
distribution $\mathcal H$ on $U$ if and only if $\alpha'=a\alpha$
for some nowhere vanishing function $a$ on $U$. The condition
$\alpha\wedge(d\alpha)^n\ne0$ implies that the form
$d\alpha\vert_x$ is nondegenerate (symplectic) structure
restricted to $\mathcal H_x$. The conformal class of
$d\alpha\vert_x$ is invariant under the change $\alpha'=a\alpha$.
If $\mathcal V$ is a linear subspace of $\mathcal H_x$, then we
have well defined orthogonal complement $\orth_\mathcal H \mathcal
V\subset \mathcal H_x$ with respect to $d\alpha\vert_x$, as well
as the notion of the {\it isotropic} ($\mathcal V \subset
\orth_\mathcal H \mathcal V$), {\it coisotropic} ($\mathcal V
\supset \orth_\mathcal H \mathcal V$) and the {\it Lagrange}
subspaces ($\mathcal V =\orth_\mathcal H \mathcal V$) of $\mathcal
H_x$ .

A {\it contact diffeomorphism} between contact manifolds
$(M,\mathcal H)$ and $(M',\mathcal H')$ is a diffeomorphism $\phi:
M\to M'$ such that $\phi_*\mathcal H=\mathcal H'$. If a local
1-parameter group of a vector field $X$ is made of contact
diffeomorphisms, $X$ is called an {\it infinitesimal automorphism}
of a contact structure $(M,\mathcal H)$ or  a {\it contact vector
field}. Locally, if $\mathcal H=\ker\alpha$, then $\mathcal
L_X\alpha=\lambda\alpha$, for some smooth function $\lambda$.

The existence of a global contact form $\alpha$ is equivalent to
the coorientability of $\mathcal H$ \cite{Gr}.   From now on we
consider a {\it co-oriented (or stricly) contact manifold}
$(M,\alpha)$. The {\it Reeb vector field} $Z$ is a vector field
uniquely defined by
$$
i_Z\alpha=1, \qquad i_Z d\alpha=0.
$$

The tangent bundle $TM$ and the cotangent bundle $T^*M$ are
decomposed into
\begin{equation}
TM=\mathcal Z \oplus \mathcal H, \qquad T^*M=\mathcal Z^0\oplus
\mathcal H^0, \label{decomposition}
\end{equation}
 where $\mathcal Z=\R Z$ is the kernel of
$d\alpha$, $\mathcal Z^0$ and $\mathcal H^0=\R\alpha$ are the
annihilators of $\mathcal Z$ and $\mathcal H$, respectively. The
sections of $\mathcal Z^0$ are called {\it semi-basic forms}.

According to \eqref{decomposition}, we have decompositions of
vector fields and 1-forms
\begin{equation}
X=(i_X\alpha)Z+\hat X, \qquad \eta=(i_Z\eta)\alpha+\hat \eta,
\label{decomp*}
\end{equation}
where $\hat X$ is horizontal and $\hat\eta$ is semi-basic.

The mapping $\alpha^\flat: X \mapsto -i_X d\alpha$ carries $X$
onto a semi-basic form. The restriction of $\alpha^\flat$ to
horizontal vector fields is an isomorphism whose inverse will be
denoted by $\alpha^\sharp$. The mapping
\begin{equation}
 \Phi: \,\mathcal N \longrightarrow C^\infty(M), \qquad  \Phi(X)=i_X\alpha \label{iso}
\end{equation}
establish the isomorphism  between the vector space $\mathcal N$
of infinitesimal contact automorphisms onto the set $C^\infty(M)$
of smooth functions on $M$, with the inverse (see \cite{L0, LM})
$$
\Phi^{-1}(f)=fZ+\alpha^\sharp(\widehat{df}).
$$

The vector field $X_f=\Phi^{-1}(f)$ is called the {\it contact
Hamiltonian vector field} and
\begin{equation}\label{eq_f}
\dot x=X_f
\end{equation}
{\it contact Hamiltonian equation} corresponding to $f$. Note that
$$
\mathcal L_{X_f}\alpha=df(Z)\alpha
$$
and $X_f$ is an {\it infinitesimal automorphism of $\alpha$}
($\mathcal L_{X_f}\alpha=0$) if and only if $df$ is semi-basic.
Notice that $\Phi(Z)=1$, i.e., $Z=X_1$.

\subsection{} The mapping \eqref{iso} is a Lie algebra isomorphism, where on
$\mathcal N$ we have the usual bracket and the {\it Jacobi
bracket} on $C^\infty(M)$ defined by $[f,g]=\Phi[X_f,X_g]$:
\begin{equation*}
X_{[f,g]}=[X_f,X_g], \qquad X_{[1,f]}=[Z,X_f].
\end{equation*}
Note that $df$ is sami-basic if and only if $[1,f]=[Z,X_f]=0$.

Together with the Jacobi bracket, we have  the associated {\it
Jacobi bi-vector field} $\Lambda$:
$$
\Lambda(\eta,\xi)=d\alpha(\alpha^\sharp\hat\eta,\alpha^\sharp\hat\xi).
$$

Let $\Lambda^\sharp: T^*M \to TM$ be the morphism defined by
$\langle\Lambda^\sharp_x(\eta_x),\xi_x\rangle=\Lambda_x(\eta_x,\xi_x)$,
for all $x\in M$, $\eta_x,\xi_x\in T^*_xM$. Then
 $X_f$
may be written as $X_f=fZ+\Lambda^\sharp(df)$.

It can be easily checked that
$$
[f,g]=d\alpha(X_f,X_g)+f\mathcal L_Z g-g\mathcal L_Z
f=\Lambda(df,dg)+f\mathcal L_Zg-g\mathcal L_Z f.
$$

The derivation of functions along the contact vector field $X_f$
can be described by the use of the Jacobi bracket
\begin{equation}\label{der}
\mathcal L_{X_f} g=[f,g]+g\mathcal L_Z f.
\end{equation}

Thus, if $df$ and $dg$ are semi-basic, we have the following
important property of the Jacobi bracket $[f,g]$.

\begin{lemma}\label{SB}
Suppose that $df$ and $dg$ are semi-basic. Then
$$[f,g]=d\alpha(X_f,X_g)=\Lambda(df,dg)$$ and the following
statements are equivalent:

(i) $f$ and $g$ are in involution: $[f,g]=0$, i.e., Hamiltonian
contact vector fields $X_f$ and $X_g$ commute: $[X_f,X_g]=0$.

(ii) $g$ is the integral of the contact vector field $X_f$:
$\mathcal L_{X_f} g=0$.

(iii) $f$ is the integral of the contact vector field $X_g$:
$\mathcal L_{X_g} f=0$.
\end{lemma}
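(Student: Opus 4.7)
The plan is to reduce everything to the identities already on the page: the formula
\[
[f,g]=d\alpha(X_f,X_g)+f\mathcal L_Zg-g\mathcal L_Zf=\Lambda(df,dg)+f\mathcal L_Zg-g\mathcal L_Zf,
\]
the derivation rule \eqref{der}, and the fact that $\Phi$ is a Lie algebra isomorphism. The single observation that drives the whole lemma is that for a function $f$, the hypothesis that $df$ be semi-basic is equivalent to $i_Z df=\mathcal L_Zf=0$ (the paper records this as $[1,f]=[Z,X_f]=0$). So once we note $\mathcal L_Zf=\mathcal L_Zg=0$, the correction terms in the Jacobi bracket formula vanish and we immediately obtain
\[
[f,g]=d\alpha(X_f,X_g)=\Lambda(df,dg),
\]
which is the first assertion.

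For the equivalences, I would first handle (i)$\Leftrightarrow$(ii): plug $\mathcal L_Zf=0$ into \eqref{der} to get $\mathcal L_{X_f}g=[f,g]+g\mathcal L_Zf=[f,g]$, so $[f,g]=0$ iff $\mathcal L_{X_f}g=0$. For (i)$\Leftrightarrow$(iii) I would invoke the antisymmetry of the Jacobi bracket and the symmetric role of $f$ and $g$: applying \eqref{der} with the roles swapped yields $\mathcal L_{X_g}f=[g,f]+f\mathcal L_Zg=-[f,g]$, so again (i)$\Leftrightarrow$(iii). The restatement of (i) as $[X_f,X_g]=0$ is then an immediate consequence of $X_{[f,g]}=[X_f,X_g]$ together with injectivity of $\Phi$, since $\Phi(0)=0$ forces $X_0=0$.

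There is really no hard step here; the content of the lemma is the vanishing of the $f\mathcal L_Zg-g\mathcal L_Zf$ correction under the semi-basic hypothesis, and after that the three conditions coincide with $[f,g]=0$ on the nose (up to sign). The only place where one has to be mildly careful is in reading the phrase \emph{$df$ is semi-basic} as the statement $\mathcal L_Zf=0$; this is exactly the content of the decomposition \eqref{decomp*} applied to the 1-form $\eta=df$, together with $i_Z df=\mathcal L_Zf$ for a function, and is precisely the equivalence the paper records just before the lemma.
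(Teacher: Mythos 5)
Your proof is correct and follows essentially the same route as the paper, which presents the lemma as an immediate consequence of the displayed identity $[f,g]=d\alpha(X_f,X_g)+f\mathcal L_Zg-g\mathcal L_Zf$ and the derivation formula \eqref{der} once the semi-basic hypothesis is read as $\mathcal L_Zf=\mathcal L_Zg=0$. The paper gives no separate proof environment precisely because the argument reduces to these substitutions, exactly as you carry them out.
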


Moreover if $\mathcal Z$ is a {simple foliation}, that is, there
exist a surjective submersion $\pi: M\to P$ and the distribution
$\mathcal Z$ consist of vertical spaces of the submersion:
$\mathcal Z=\ker\pi_*$, then the base manifold $P$ has a
non-degenerate Poisson structure $\{\cdot,\cdot\}$ such that
$[f,g]=\pi^*\{\bar f,\bar g\}$, $f=\bar f\circ\pi$, $g=\bar
g\circ\pi$ \cite{BW, LM}.

\section{$\alpha$-Complete pre-isotropic foliations}

Let $\mathcal F$ be a foliation on a co-oriented contact manifold
$(M^{2n+1},\alpha)$. The {\it pseudo-orthogonal distribution}
$\mathcal F^\perp$ is defined by
$$
\mathcal F^\perp=\mathcal Z\oplus\Lambda^\sharp(\mathcal F^0).
$$
where $\mathcal F^0$ is the annihilator of $\mathcal F$. It is
locally generated by the Reeb vector field $Z$ and the contact
Hamiltonian vector fields which corresponds to the first integrals
of $\mathcal F$.

A foliation $\mathcal F$ is said to be {\it $\alpha$-complete} if
for any pair $f_1,f_2$ of first integrals  of $\mathcal F$ (where
$f_i$ may be a constant), the bracket $[f_1,f_2]$ is also a first
integral of $\mathcal F$ (eventually a constant).

\begin{theorem}[Libermann \cite{L1}]
A foliation $\mathcal F$ on $(M^{2n+1},\alpha)$ containing the
Reeb vector field $Z$ is $\alpha$-complete if and only if the
pseudo-orthogonal subbundle $\mathcal F^\perp$ is integrable,
defining a foliation which is also $\alpha$-complete and
$(\mathcal F^\perp)^\perp=\mathcal F$. Then for any pair of
integrals $f,g$ of $\mathcal F$ and $\mathcal F^\perp$,
respectively, we have $[f,g]=0$.
\end{theorem}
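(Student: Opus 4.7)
My plan exploits that a first integral $f$ of $\mathcal F$ has semi-basic differential, so that its contact Hamiltonian vector field is an infinitesimal automorphism of $\alpha$ and lies in $\mathcal F^\perp$. Since $Z\in\mathcal F$, any such $f$ satisfies $\mathcal L_Z f=0$, equivalently $[1,f]=0$, and the annihilator $\mathcal F^0$ consists entirely of semi-basic forms. Then $\Lambda^\sharp(\mathcal F^0)=\alpha^\sharp(\mathcal F^0)$, and locally $\mathcal F^\perp$ is spanned by $Z=X_1$ together with $X_{f_1},\dots,X_{f_k}$ for a complete local set of first integrals of $\mathcal F$.

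The core step is the following equivalence: \emph{for any function $h$ with $dh$ semi-basic, $X_h\in\mathcal F^\perp$ if and only if $h$ is a first integral of $\mathcal F$}. I would prove this by decomposing $X_h=hZ+\alpha^\sharp(\widehat{dh})$ and using that $\alpha^\sharp$ is a bijection between semi-basic $1$-forms and horizontal vectors, so $\alpha^\sharp(\widehat{dh})\in\alpha^\sharp(\mathcal F^0)$ is equivalent to $dh\in\mathcal F^0$. Combined with $[X_{f_i},X_{f_j}]=X_{[f_i,f_j]}$, integrability of $\mathcal F^\perp$ reduces to $X_{[f_i,f_j]}\in\mathcal F^\perp$ for all $i,j$; the Jacobi identity applied to $1,f_i,f_j$ together with $[1,f_i]=[1,f_j]=0$ yields $[1,[f_i,f_j]]=0$, so $d[f_i,f_j]$ is again semi-basic, and the equivalence above identifies this condition with $[f_i,f_j]$ being a first integral of $\mathcal F$---precisely the $\alpha$-completeness of $\mathcal F$. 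The brackets $[Z,X_{f_i}]=X_{[1,f_i]}=0$ are automatic.

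For the duality $(\mathcal F^\perp)^\perp=\mathcal F$ the dimensions agree, so it suffices to show $\mathcal F\subset(\mathcal F^\perp)^\perp$. Given $Y\in\mathcal F$, decompose $Y=(i_Y\alpha)Z+\hat Y$ as in \eqref{decomp*}; the $Z$-component lies in $(\mathcal F^\perp)^\perp$ automatically, while for $\hat Y\in\mathcal F\cap\mathcal H$ one shows $\alpha^\flat(\hat Y)\in(\mathcal F^\perp)^0$ by checking that $\alpha^\flat(\hat Y)(Z)=0$ (being semi-basic) and, for any $\eta\in\mathcal F^0$, $d\alpha(\hat Y,\alpha^\sharp\eta)=\eta(\hat Y)=0$ since $\hat Y\in\mathcal F$. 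Once duality is established, the $\alpha$-completeness of $\mathcal F^\perp$ is the same equivalence applied to $\mathcal F^\perp$ in place of $\mathcal F$. Finally, if $f$ is a first integral of $\mathcal F$ and $g$ a first integral of $\mathcal F^\perp$, then both $df$ and $dg$ are semi-basic (since $Z$ lies in both foliations), and $X_f\in\mathcal F^\perp$ together with $g$ being an integral of $\mathcal F^\perp$ gives $\mathcal L_{X_f}g=0$; so by Lemma~\ref{SB} we conclude $[f,g]=0$.

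The principal obstacle is the precise identification of $\mathcal F^\perp$ and $(\mathcal F^\perp)^0$ in terms of semi-basic forms, which rests on the hypothesis $Z\in\mathcal F$ and on the bijectivity of $\alpha^\sharp$ on semi-basic forms. Once this bookkeeping is done, the equivalence "integrability $\Leftrightarrow$ $\alpha$-completeness," the duality, and the vanishing $[f,g]=0$ all reduce cleanly to the Jacobi identity and Lemma~\ref{SB}.
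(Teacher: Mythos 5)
Your argument is correct. The paper itself gives no proof of this statement (it is quoted from Libermann \cite{L1}), so there is nothing to compare line by line; but your route is precisely the one the surrounding text presupposes, and every ingredient you use is the machinery the author deploys elsewhere: the observation that $Z\in\mathcal F$ forces $\mathcal F^0$ to be semi-basic, hence $\Lambda^\sharp(\mathcal F^0)=\alpha^\sharp(\mathcal F^0)$ and $\mathcal F^\perp=\langle Z,X_{f_1},\dots,X_{f_k}\rangle$, reappears in the proofs of Theorems 2 and 3, and your final step is literally Lemma~\ref{SB}. Two points are worth one extra sentence each in a written version: in the direction ``integrability of $\mathcal F^\perp$ implies $\alpha$-completeness'' you should run your core equivalence for an \emph{arbitrary} pair of first integrals $f,g$ rather than only the generating set (this works verbatim, since $X_f,X_g$ are sections of the integrable distribution $\mathcal F^\perp$ and $[X_f,X_g]=X_{[f,g]}$ with $d[f,g]$ semi-basic by the Jacobi identity); and the dimension count behind $(\mathcal F^\perp)^\perp=\mathcal F$ uses that $(\mathcal F^\perp)^0$ is again semi-basic, which holds because $\mathcal Z\subset\mathcal F^\perp$ by construction. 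Both are immediate, so the proof stands.
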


Let $p$ be the rank of $\mathcal F^0$ and $f_1,\dots,f_p$ be a set
of independent integrals of $\mathcal F$ in an open set $U$. Since
$\ker\Lambda_x^\sharp=\mathbb R\alpha_x$, $\dim \mathcal
F^\perp_x$ is equal to $p+1$ or $p$, depending the forms
$\alpha,df_1,\dots,df_p$ are linearly independent or not. In the
later case, the form induced by $\alpha$ on the leaf passing
through $x$ vanishes at $x$. Conversely, if $\alpha\vert_{\mathcal
F}=0$, i.e., $\mathcal F\subset\mathcal H$, then $\dim\mathcal
F^\perp=p$.

A foliation $\mathcal G$ is {\it pseudo-isotropic} if $\mathcal
G\subset \mathcal H$ \cite{L2}.\footnote{Submanifolds $G\subset M$
that are integral manifolds of $\mathcal H$ are also called
isotropic submanifolds, e.g., see \cite{Ge}. Here we keep
Libermann's notation.} Then $\alpha$ is a section of $\mathcal
G^0$, the distribution $\mathcal G^\perp$ has the constant rank
$p$ and $\mathcal G^\perp$ is a vector bundle. A {\it Legendre
foliation} is a pseudo-isotropic foliation of maximum rank $n$.
Then $\dim\mathcal G^0=\dim\mathcal G^\perp=n+1$.

By the analogy with a pre-isotropic embedding (see Lerman
\cite{Le}), we introduce:

\begin{definition}{\rm
A foliation $\mathcal F$ is {\it pre-isotropic} if

\begin{itemize}

\item[(i)] $\mathcal F$ is transverzal to $\mathcal H$

\item[(ii)] $\mathcal G=\mathcal F \cap\mathcal H$ is an isotropic
subbundle of $\mathcal H$.

\end{itemize}
}\end{definition}

\begin{lemma}\label{fol}
The condition (ii) is equivalent to the condition that $\mathcal
G=\mathcal F\cap \mathcal H$ is a pseudo-isotropic foliation.
\end{lemma}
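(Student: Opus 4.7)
The plan is to reduce the statement to Cartan's formula relating $d\alpha$ to the Lie bracket, which on $\ker\alpha$ turns the isotropy condition into a Frobenius integrability condition. Since condition (i) gives that $\mathcal F$ is transverse to the corank-one distribution $\mathcal H$, the intersection $\mathcal G=\mathcal F\cap\mathcal H$ is automatically a subbundle of constant rank $\dim\mathcal F-1$, contained in $\mathcal H$ by construction. Hence the only content of ``pseudo-isotropic foliation'' that remains to be verified is the integrability of $\mathcal G$, and the lemma reduces to the equivalence
\[
\mathcal G \text{ integrable} \iff \mathcal G \text{ isotropic in } \mathcal H.
\]

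Next I would work locally, where $\mathcal H=\ker\alpha$ for a contact form $\alpha$ (the isotropy condition is invariant under the rescaling $\alpha'=a\alpha$, so the choice is immaterial). For any sections $X,Y$ of $\mathcal G\subset\ker\alpha$, both $\alpha(X)$ and $\alpha(Y)$ vanish, so Cartan's formula
\[
d\alpha(X,Y)=X(\alpha(Y))-Y(\alpha(X))-\alpha([X,Y])
\]
collapses to the pointwise identity $d\alpha(X,Y)=-\alpha([X,Y])$. This is the single computational fact on which the whole argument rests.

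The integrability of $\mathcal F$ then does the rest of the work: because $X,Y$ are in particular sections of $\mathcal F$ and $\mathcal F$ is a foliation, $[X,Y]$ already lies in $\mathcal F$. Therefore
\[
[X,Y]\in\mathcal G=\mathcal F\cap\mathcal H \iff \alpha([X,Y])=0 \iff d\alpha(X,Y)=0,
\]
and taking this over all local sections of $\mathcal G$ gives precisely the equivalence of Frobenius integrability of $\mathcal G$ with isotropy of $\mathcal G$ in $\mathcal H$. Combined with the constant-rank observation above, this yields the lemma.

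I do not anticipate any real obstacle; the only point of care is noting that integrability of the ambient $\mathcal F$ is exactly what kills the potentially transverse component of $[X,Y]$, so that Cartan's identity alone is enough to match isotropy against integrability. Without the hypothesis that $\mathcal F$ be a foliation one would only obtain $[X,Y]\in\mathcal H$, not $[X,Y]\in\mathcal G$, which is what makes the ``pre-isotropic'' setup the natural one.
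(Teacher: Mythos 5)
Your proposal is correct and follows essentially the same route as the paper: both reduce the lemma to the equivalence ``$\mathcal G$ integrable $\iff$ $\mathcal G$ isotropic'' and settle it via the Cartan identity $d\alpha(X,Y)=-\alpha([X,Y])$ for sections $X,Y$ of $\mathcal G$, using integrability of $\mathcal F$ (which the paper encodes by checking $df_i([X,Y])=0$ for local integrals $f_i$ of $\mathcal F$) to ensure $[X,Y]$ already lies in $\mathcal F$. Your added remarks on the constant rank of $\mathcal G$ and the conformal invariance of the isotropy condition are correct but not needed beyond what the paper states.
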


\begin{proof} Let $(f_1,\dots,f_p)$ be a set of local integrals of
$\mathcal F$ and let $X,Y$ be sections of $\mathcal G$. Then
$\alpha$, $df_1,\dots,df_p$ are linearly independent and we have
\begin{eqnarray*}
&&df_i(X)=df_i(Y)=\alpha(X)=\alpha(Y)=0,\\
&&df_i([X,Y])=\mathcal L_X\mathcal L_Y f_i-\mathcal L_Y\mathcal
L_X f_i=0,\\
&&d\alpha(X,Y)=\mathcal L_X\alpha(Y)-\mathcal
L_Y\alpha(X)-\alpha([X,Y])=-\alpha([X,Y]).
\end{eqnarray*}
Therefore $\mathcal G$ is an isotropic subbundle of $\mathcal H$
if and only if it is integrable. \end{proof}

\begin{theorem} Let $\mathcal F$ be a pre-isotropic foliation
containing the Reeb vector field $Z$.

(i)  We have the flag of distributions $(\mathcal G,\mathcal
F,\mathcal E)$:
\begin{equation}\label{flag}
\mathcal G=\mathcal F \cap \mathcal H \,\,\subset\,\, \mathcal F
\,\,\subset \,\,\mathcal E=\mathcal G^\perp=\mathcal F^\perp.
\end{equation}

Contrary, if $\mathcal F$ is a  foliation containing the Reeb
vector field $Z$ and \eqref{flag} holds, then $\mathcal F$ is a
pre-isotropic foliation.

(ii) The foliation $\mathcal F$ (or $\mathcal G$) is
$\alpha$-complete if and only if $\mathcal E$ is completely
integrable. Assume $\mathcal E$ is integrable and let
$f_1,\dots,f_p$ and $y_1,\dots,y_r$, $2n-p=r$ be any sets of local
integrals of $\mathcal F$ and $\mathcal E$, respectively. Then:
$$
[f_i,y_j]=0, \quad [y_j,y_k]=0, \quad [f_i,1]=0, \quad [y_i,1]=0.
$$

(iii) Each leaf of an $\alpha$-complete pre-isotropic foliation
$\mathcal F$ as well as each leaf of the corresponding
pseudo-isotropic foliation $\mathcal G$ has an affine structure.
\end{theorem}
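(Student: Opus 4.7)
My plan is to handle (i), (ii), (iii) in sequence; the technical heart is identifying $\mathcal F^\perp$ and $\mathcal G^\perp$ with a common distribution built from $\orth_\mathcal H \mathcal G$, after which (ii) follows from Libermann's Theorem 1 and (iii) from the observation that Hamiltonians of $\mathcal E$-integrals are tangent to $\mathcal F$. The main obstacle I expect is the $\mathcal G$-direction of (ii), since Libermann's theorem is stated only for foliations containing $Z$, and the Jacobi bracket is not a derivation in the usual sense (formula \eqref{der} carries a correction in $\mathcal L_Z$), so bookkeeping for integrals of $\mathcal G$ requires care.

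For (i), since $Z \in \mathcal F$ and $\mathcal F$ is transversal to $\mathcal H$, the splitting \eqref{decomp*} gives $\mathcal F = \mathcal Z \oplus \mathcal G$, whence $\mathcal F^0 = \mathcal Z^0 \cap \mathcal G^0$ and $\mathcal G^0 = \mathcal F^0 \oplus \R\alpha$. Because $\hat\alpha = 0$ (from $i_Z\alpha = 1$), we have $\Lambda^\sharp(\alpha) = 0$ and hence $\Lambda^\sharp(\mathcal G^0) = \Lambda^\sharp(\mathcal F^0)$. A short $\alpha^\sharp$-computation identifies this common space with $\orth_\mathcal H \mathcal G$, giving $\mathcal F^\perp = \mathcal G^\perp = \mathcal Z \oplus \orth_\mathcal H \mathcal G = \mathcal E$. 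Isotropy of $\mathcal G$ reads $\mathcal G \subset \orth_\mathcal H \mathcal G$, whence the flag \eqref{flag}. Conversely, intersecting $\mathcal F \subset \mathcal E$ with $\mathcal H$ yields $\mathcal G \subset \orth_\mathcal H \mathcal G$, and transversality of $\mathcal F$ to $\mathcal H$ is automatic from $Z \in \mathcal F \setminus \mathcal H$.

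For (ii), since $Z \in \mathcal F \subset \mathcal E$ with $\mathcal F^\perp = \mathcal E$ and $\mathcal E^\perp = \mathcal F$, Libermann's Theorem 1 applied to each of $\mathcal F$ and $\mathcal E$ shows that $\alpha$-completeness of $\mathcal F$, $\alpha$-completeness of $\mathcal E$, and integrability of $\mathcal E$ are all equivalent. For the pseudo-isotropic $\mathcal G$, I would show $\alpha$-completeness is equivalent to that of $\mathcal F$ by writing any integral of $\mathcal G$ locally as $\varphi(f_1,\dots,f_p,t)$ with $f_i$ integrals of $\mathcal F$ and $dt \in \R\alpha + \mathcal F^0$, then reducing the Jacobi bracket of two such expressions via \eqref{der} to brackets among the $f_i$ and with $t$, all of which close on integrals of $\mathcal F$ under its $\alpha$-completeness. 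The vanishing identities are then immediate from (i): for $f$ an integral of $\mathcal F$, $df$ is semi-basic and $\alpha^\sharp(df) \in \orth_\mathcal H \mathcal G$, so $X_f = fZ + \alpha^\sharp(df) \in \mathcal E$; for $y$ an integral of $\mathcal E$, formula \eqref{der} yields $[f,y] = dy(X_f) - y\mathcal L_Z f = 0$ since $dy|_{\mathcal E} = 0$ and $\mathcal L_Z f = 0$. The same argument with $y$ in place of $f$ gives $[y_j, y_k] = 0$, and $[f_i, 1] = \mathcal L_Z f_i = 0$, $[y_j, 1] = \mathcal L_Z y_j = 0$ because $Z \in \mathcal F \subset \mathcal E$ makes the relevant differentials semi-basic.

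For (iii) I sharpen the computation of (ii): for an integral $y$ of $\mathcal E$, $X_y$ is tangent to $\mathcal F$ itself. Indeed, for any $\beta \in \mathcal F^0$ the $\alpha^\sharp$-identity yields $\beta(\alpha^\sharp(dy))$ equal up to sign to $dy(\alpha^\sharp(\beta))$, and $\alpha^\sharp(\beta) \in \orth_\mathcal H \mathcal G \subset \mathcal E$ by (i), so this vanishes; thus $\alpha^\sharp(dy) \in \mathcal G$ and $X_y \in \mathcal F$. Taking local integrals $y_1,\dots,y_r$ of $\mathcal E$ with independent differentials, the horizontal parts $\alpha^\sharp(dy_j)$ are independent in $\mathcal G$ (injectivity of $\alpha^\sharp$ on semi-basic forms), so $Z, X_{y_1},\dots,X_{y_r}$ is a pointwise frame of $\mathcal F$ consisting of contact vector fields that commute pairwise by $[X_y,X_{y'}] = X_{[y,y']} = 0$ and $[Z,X_y] = X_{[1,y]} = 0$; their flows generate an Abelian Lie algebra of contact symmetries acting locally transitively on each leaf of $\mathcal F$, endowing it with the desired affine structure. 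For $\mathcal G$, since every $y_j$ is constant along a leaf $F$ of $\mathcal F$, the combinations $X_{y_j} - y_j Z = \alpha^\sharp(dy_j)$ are tangent to $\mathcal G$, commute pairwise on $F$, and span $\mathcal G|_{F}$, so each leaf of $\mathcal G$ inherits an affine structure as well.
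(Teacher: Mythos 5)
Your proposal tracks the paper's own proof closely in both structure and substance: part (i) is the same computation $\mathcal F^\perp=\mathcal G^\perp=\mathcal Z\oplus\alpha^\sharp(\mathcal G^0\cap\mathcal Z^0)=\mathcal Z\oplus\orth_\mathcal H\mathcal G$; part (ii) rests on Libermann's Theorem 1 exactly as in the paper (and your direct verification of the four bracket identities via $X_f\in\mathcal E$ and formula \eqref{der} is a correct expansion of what the paper leaves implicit); and part (iii) uses the same commuting frame $Z,X_{y_1},\dots,X_{y_r}$ of $\mathcal F$ together with the horizontal parts $X_{y_j}-y_jZ$ for $\mathcal G$. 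Your pointwise argument that $X_y$ is tangent to $\mathcal F$ (via $df_i(\alpha^\sharp(dy))=-dy(\alpha^\sharp(df_i))=0$) is a legitimate substitute for the paper's appeal to $\mathcal E^\perp=\mathcal F$ from Theorem 1.

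There is, however, one genuine gap, in (iii). The integrals $y_1,\dots,y_r$ of $\mathcal E$ exist only locally, so your commuting frame lives only on a chart domain $U$, while a leaf of $\mathcal F$ (or of $\mathcal G$) need not be contained in any single such domain. ``Commuting flows acting locally transitively'' therefore only yield a parallelism of $\mathcal F\vert_U$ and $\mathcal G\vert_U$; to conclude that each \emph{whole} leaf carries an affine structure you must check that on overlaps the two frames differ by a transformation that is affine along the leaves. The paper spends the last third of its proof of (iii) on precisely this: if $y_i'=\varphi_i(y_1,\dots,y_r)$ is a second system of integrals, then $\hat X_{y_i'}=\sum_j(\partial\varphi_i/\partial y_j)\hat X_{y_j}$, a fiber-wise linear change of frame whose coefficients are themselves integrals of $\mathcal F$, hence constant on each leaf. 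Your proof needs this step added. A smaller point: your reduction of the $\alpha$-completeness of $\mathcal G$ to that of $\mathcal F$ in (ii) is only sketched; note that an integral $t$ of $\mathcal G$ transverse to the integrals of $\mathcal F$ necessarily has $\mathcal L_Z t\ne 0$, so the correction terms $g\,\mathcal L_Z f$ in \eqref{der} do not drop out and the deferred bracket computation is not entirely routine --- though the paper itself is equally terse here, deferring everything to Theorem 1.
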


\begin{figure}[ht]
\begin{center}
\includegraphics[scale=3]{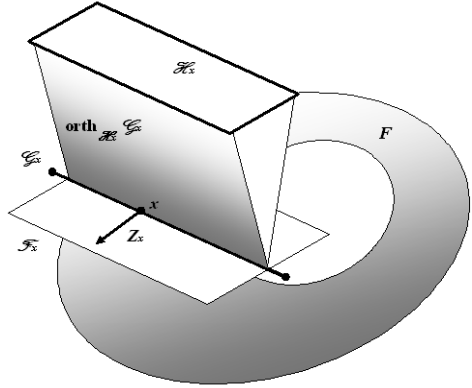}
\caption{Illustration of Theorem 2: a torus $F$ is a leaf through
$x$.}
\end{center}
\end{figure}

\begin{proof} (i) Let $\mathcal G=\mathcal F\cap \mathcal H$ be isotropic.
We have $\mathcal G^0=\langle \mathcal F^0,\alpha\rangle$ and
$\ker \Lambda^\sharp=\R\alpha$. Thus:
\begin{eqnarray*}
\mathcal F^\perp=\mathcal G^\perp &=& \mathcal
Z\oplus\Lambda^\sharp(\mathcal G^0) =\mathcal Z\oplus
\Lambda^\sharp(\mathcal G^0 \cap\mathcal Z^0)\\
&=&\mathcal Z\oplus \alpha^\sharp(\mathcal G^0 \cap \mathcal
Z^0)=\mathcal Z\oplus \orth_\mathcal H\mathcal G \supset \mathcal
Z \oplus \mathcal G =\mathcal F.
\end{eqnarray*}

\medskip

{(ii)} This item follows directly from Theorem 1 and the fact that
integrals of $\mathcal E$ are also integrals of $\mathcal F$.
Notice that $df_i$ and $dy_j$ are semi-basic and $X_{f_i}$,
$X_{y_j}$ are infinitesimal automorphisms of $\alpha$.

\medskip

{(iii)} Let $U$ be an open  set where we have defined commuting
integrals $y_1,\dots,y_r$ of $\mathcal E\vert_U$. Since $\mathcal
E^\perp=\mathcal F$, the distribution $\mathcal F\vert_U$ is
generated by a contact commuting vector fields
$Z,X_{y_1},\dots,X_{y_r}$:
$$
[Z, X_{y_i}]=0, \qquad [X_{y_i},X_{y_j}]=0.
$$

The distribution $\mathcal G\vert_U$ is generated by their
horizontal parts $\hat X_{y_1},\dots,\hat X_{y_r}$ which also
commute. Indeed, since $\mathcal G$ is integrable $[\hat
X_{y_i},\hat X_{y_j}]$ is a section of $\mathcal G$, in particular
it is horizontal. Further
\begin{eqnarray}
0&=& [X_{y_i},X_{y_j}]=[y_i Z+\hat X_{y_i},y_j Z+\hat X_{y_j}]
\nonumber\\
&=& [y_i Z,y_jZ]+[\hat X_{y_i},\hat X_{y_j}] +y_i[Z,\hat
X_{y_j}]+y_j[\hat X_{y_i},Z] \label{prva}\\
&& -\mathcal L_{\hat X_{y_j}}(y_i) Z+\mathcal L_{\hat X_{y_i}}
(y_j) Z.\nonumber
\end{eqnarray}
On the other side, since $\mathcal L_Z y_i=0$, we have
\begin{eqnarray}
0&=&[Z,X_{y_i}]=[Z,y_iZ+\hat X_{y_i}]=[Z,y_i Z]+[Z,\hat
X_{y_i}]\nonumber \\
&=& \mathcal L_Z(y_i)Z+[Z,\hat X_{y_i}]=[Z,\hat X_{y_i}].
\label{druga}
\end{eqnarray}
Therefore, taking the horizontal part in \eqref{prva} we get
$$
[\hat X_{y_i},\hat X_{y_j}]=0.
$$

Thus, locally we have parallelism both on $\mathcal F=\langle Z,
\hat X_{y_1},\dots,\hat X_{y_r}\rangle$ and $\mathcal G=\langle
\hat X_{y_1},\dots,\hat X_{y_r}\rangle$. Now, let $U'$ be an open
set ($U\cap U'\ne\emptyset$) and let $y_1',\dots,y_r'$ be
commuting integrals of $\mathcal E\vert_{U'}$. Then, on $U\cap U'$
we have
\begin{eqnarray*}
&&y_i'=\varphi_i(y_1,\dots,y_r), \qquad i=1,\dots,r\\
&&dy_i'=\sum_j \frac{\partial \varphi_i}{\partial y_j}dy_j.
\end{eqnarray*}

From the definition $\hat
X_{y_i'}=\alpha^\sharp(\widehat{dy'_i})=\alpha^\sharp(dy_i'-(i_Z
dy_i')\alpha)=\alpha^\sharp(dy'_i)$, we get the fiber-wise linear
transformation
$$
\hat X_{y_i'}=\sum_j \frac{\partial \varphi_i}{\partial y_j} \hat
X_{y_j'}, \qquad i=1,\dots,r
$$
which shows that the parallelism of $\mathcal G$ and $\mathcal F$
is independent of the chart. \end{proof}

If $\mathcal F$ has the maximal dimension $n+1$ then $\mathcal F$
is  pre-Legendrian, while $\mathcal G$ is a Legendrian foliation.
The existence of an affine structure is already known for
$\alpha$-complete Legendre foliations \cite{L1, L2,P, KT}. This
imposes restrictions on the topology of the leaves. In particular,
compact leaves of $\mathcal G$ and $\mathcal F$ are tori.

Of particular interest is the case when $\mathcal F$ is a simple
foliation, i.e., the leaves of the foliation are fibers of the
submersion. We will study such a situation in the next section.

\section{Complete pre-isotropic contact structures}

In this section, a contact structure does not need to be
co-oriented.

Let $(M,\mathcal H)$ be a $(2n+1)$-dimensional contact manifold
and let
\begin{equation}
\pi: M \to W \label{submersion}
\end{equation}
be a proper submersion on $p$-dimensional manifold $W$, $p\ge n$.
Define the distribution $\mathcal F$ as the kernel of $\pi_*: TM
\to TW$, i.e., the leaves of $\mathcal F$ are fibers of $\pi$.

\begin{definition}{\rm
We shall say that $(M,\mathcal H,\mathcal X)$ is a {\it complete
pre-isotropic contact structure} if
\begin{itemize}
\item[(i)] $\mathcal F$ is pre-isotropic, i.e., it is transversal
to $\mathcal H$ and $\mathcal G=\mathcal F\cap \mathcal H$ is an
isotropic subbundle of $\mathcal H$, or, equivalently $\mathcal G$
is a foliation;

\item[(ii)] $\mathcal X$ is an Abelian Lie algebra of
infinitesimal contact automorphisms of $\mathcal H$, which has the
fibers of $\pi$ as orbits.
\end{itemize}
 }\end{definition}

In the case $p=n$ (and connected fibers) we have a {\it regular
completely integrable contact structure} $(M,\mathcal H,\mathcal
X)$ studied in Banyaga and Molino \cite{BM}.

Suppose $\mathcal F$ is an $\alpha$-complete foliation with
compact leaves (according to the presence of the affine structure,
the leaves are tori). Locally, in a neighborhood $U$ of any fixed
torus $F$ the foliation is simple. There is a surjective
submersion $\pi: U\to W=U/\mathcal F$, $\mathcal F=\ker\pi_*$. We
can define an Abelian Lie algebra $\mathcal X$ of infinitesimal
automorphisms of $\mathcal H$ by $Z, X_{y_1},\dots,X_{y_r}$ where,
$y_1,\dots,y_r$ are integrals of $\mathcal E=\mathcal F^\perp$.
Thus, we have well defined complete pre-isotropic contact
structure $(U,\mathcal H,\mathcal X)$.

Contrary, we have also:

\begin{theorem}
Let $(M,\mathcal H,\mathcal X)$ be a complete pre-isotropic
contact structure related to the submersion \eqref{submersion}.
Every point of $M$ has an open, $\mathcal X$-invariant
neighborhood $U$ on which the contact structure can be represented
by a local contact form $\alpha_U$ such that:

(i) $\alpha_U$ is invariant by all elements of $\mathcal X$;

(ii) the restriction of $\mathcal F$ to $U$ is
$\alpha_U$-complete.
\end{theorem}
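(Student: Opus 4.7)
The argument has two layers: (ii) will follow from (i) by an easy symmetry argument, so the bulk of the work lies in constructing the $\mathcal{X}$-invariant contact form. \emph{Reduction of (ii) to (i).} Suppose $\alpha_U$ is $\mathcal{X}$-invariant. Each $X\in\mathcal{X}$ then preserves $\alpha_U$, hence also the Reeb field $Z_U$, the sharp map $\alpha_U^{\sharp}$, and the Jacobi bracket built from $\alpha_U$; in particular $[X,Z_U]=0$ and $\mathcal{L}_X$ is a derivation of $[\cdot,\cdot]_{\alpha_U}$. Because the $\mathcal{X}$-orbits are exactly the fibers of $\pi$, first integrals of $\mathcal{F}|_U$ coincide with the $\mathcal{X}$-invariant smooth functions on $U$. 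For two such integrals $f,g$ the derivation identity $\mathcal{L}_X[f,g]=[\mathcal{L}_X f,g]+[f,\mathcal{L}_X g]=0$ keeps $[f,g]$ $\mathcal{X}$-invariant, hence again a first integral; similarly $[1,f]=\mathcal{L}_{Z_U}f$ is $\mathcal{X}$-invariant by $[X,Z_U]=0$.

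\smallskip

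\emph{Construction of $\alpha_U$.} Fix $x_0\in M$ and shrink to an $\mathcal{X}$-invariant neighborhood $U=\pi^{-1}(V)$, $V\ni\pi(x_0)$, on which $\mathcal{H}$ is cooriented by some local contact form $\alpha$. For $X\in\mathcal{X}$ write $\mathcal{L}_X\alpha=\lambda_X\alpha$; abelianness of $\mathcal{X}$ gives the compatibility $\mathcal{L}_X\lambda_Y=\mathcal{L}_Y\lambda_X$ out of $\mathcal{L}_{[X,Y]}\alpha=0$. Setting $\alpha_U=a\alpha$ with $a>0$ turns invariance into the compatible system
\[
\mathcal{L}_X\log a=-\lambda_X,\qquad X\in\mathcal{X},
\]
which I solve by fiberwise averaging. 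The fibers of $\pi$, being compact orbits of the abelian $\mathcal{X}$, are tori $\mathbb{T}^{r+1}$; a basis of $\mathcal{X}$ generates a local $\R^{r+1}$-action $\phi$ on $U$ that factors fiberwise through $\R^{r+1}/\Lambda(\pi(x))$, with isotropy lattice $\Lambda(v)$ depending smoothly on $v\in V$. Let $b(t,x)>0$ be defined by $(\phi_t)^*\alpha=b(t,\cdot)\alpha$. The crucial point is the lattice-periodicity $b(\lambda,x)=1$ for $\lambda\in\Lambda(\pi(x))$: since $\phi_\lambda$ fixes $x$, preserves $\mathcal{H}$, and is the identity on the fiber $F$ through $x$, one has $(\phi_\lambda)_{*,x}=\mathrm{id}+L$ with $L(T_xM)\subset T_xF$, $L|_{T_xF}=0$, and (by contact preservation) $L(\mathcal{H}_x)\subset\mathcal{G}_x$; writing any $v\in T_xM$ as $v_F+v_H\in T_xF+\mathcal{H}_x$ and using $\alpha|_{\mathcal{G}}=0$ one gets $\alpha_x((\phi_\lambda)_*v)=\alpha_x(v_F)=\alpha_x(v)$. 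Defining
\[
a(x)\;:=\;\int_{D(\pi(x))}b(t,x)\,dt
\]
over a smoothly varying fundamental domain $D(v)\subset\R^{r+1}$ of $\Lambda(v)$, the cocycle $b(t+s,x)=b(t,\phi_s(x))b(s,x)$ together with the $\Lambda(\pi(x))$-periodicity of $b(\cdot,x)$ gives $a(\phi_s(x))=a(x)/b(s,x)$, equivalently $(\phi_s)^*(a\alpha)=a\alpha$. Hence $\alpha_U=a\alpha$ satisfies (i).

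\smallskip

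The delicate step is the lattice-periodicity $b(\lambda,x)=1$; this is where the full pre-isotropic hypothesis (transversality $\mathcal{F}+\mathcal{H}=TM$ together with $\mathcal{G}\subset\mathcal{H}$) and the contact-symmetric character of $\mathcal{X}$ combine decisively. Given this, the averaging is routine (the fundamental domains $D(v)$ can be parameterized by any smooth local basis of $\Lambda(v)$, so smoothness is local in $V$), and (ii) follows from the first paragraph.
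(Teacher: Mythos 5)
Your argument reaches the right conclusion but by a genuinely different route from the paper's. For (i), the paper does not average: it picks $X\in\mathcal X$ transverse to $\mathcal H$ near the given point, normalizes a local defining form to $\alpha=\alpha_0/\alpha_0(X)$, and then the one-line computation $0=i_{[Y,X]}\alpha=\mathcal L_Y(i_X\alpha)-i_X(\mathcal L_Y\alpha)=-\lambda$ kills the conformal factor $\lambda$ for every $Y\in\mathcal X$; the form is then spread over the saturation $\pi^{-1}(\pi(U_1))$ using this very invariance. Your torus-averaging achieves the same end, and your key step --- the lattice periodicity $b(\lambda,x)=1$, via $(\phi_\lambda)_{*,x}=\mathrm{id}+L$ with $L(T_xM)\subset T_xF$, $L|_{T_xF}=0$ and $L(\mathcal H_x)\subset\mathcal G_x$ --- is correct (note it only uses transversality of $\mathcal F$ and $\mathcal H$ plus $\mathcal G_x\subset\ker\alpha_x$, which holds by definition of $\mathcal G$; the isotropy of $\mathcal G$ plays no role here). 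One comparative disadvantage: the paper's normalization makes the chosen $X\in\mathcal X$ the Reeb field of $\alpha_U$, a feature explicitly relied upon later in the proof of Theorem 4; your averaged form carries no such distinguished Reeb field in $\mathcal X$. For (ii) you also diverge: the paper exhibits explicit integrals $y_i=i_{X_i}\alpha$ of the pseudo-orthogonal distribution and appeals to Libermann's criterion (Theorem 1), whereas you argue directly that $\mathcal L_X$ is a derivation of the Jacobi bracket and that first integrals of $\mathcal F\vert_U$ are exactly the $\mathcal X$-invariant functions. This is clean, correct, and does not even require the Reeb field to be tangent to $\mathcal F$.

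There is one genuine gap. At the outset you posit a contact form $\alpha$ defining $\mathcal H$ on the entire saturated neighborhood $U=\pi^{-1}(V)$. That is not free: $U$ retracts onto the torus fiber $F\cong\mathbb T^{r+1}$, so coorientability of $\mathcal H\vert_U$ is a nontrivial assertion (a real line bundle over a torus need not be trivial), and the existence of such a global $\alpha_U$ is part of what the theorem claims. It can be repaired --- for instance, $TM/\mathcal H\cong\mathcal F/\mathcal G$ restricted to $F$ is an equivariant line bundle over a simply transitive torus orbit, hence trivial; or one can follow the paper and first construct the invariant form on a small non-saturated ball and then propagate it along the fibers by the $\mathcal X$-action --- but as written this step is assumed rather than proved. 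The remaining technicalities you gesture at (completeness of the flows generating the $\R^{r+1}$-action, smooth dependence of the isotropy lattice and fundamental domains) are indeed routine here because $\pi$ is proper, so the fibers are compact orbits.
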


\begin{proof} (i) The proof of item (i) is a modification of the proof
given in \cite{BM} for a regular completely integrable contact
structure. From the definition, for every point $x_0$ of $M$,
there exist $X\in\mathcal X$ transverze to $\mathcal H_{x_0}$. The
vector field $X$ is then transvrerze to $\mathcal H$ in some
neighborhood $U_2$ of $x_0$. Let $\alpha_0$ be a contact form
defining $\mathcal H$ in $U_1\subset U_2$. Then $\alpha_0(X)\ne 0$
on $U_1$ and define $\alpha=\alpha_0/\alpha_0(X)$.

Since $\mathcal X$ is Abelian, we have  $i_{[Y,X]}\alpha=0$,
$Y\in\mathcal X$.  Also, $i_X\alpha=1$ and $\mathcal
L_Y\alpha=\lambda\alpha$, for some function $\lambda$ defined in
$U_1$. Thus
$$
0=i_{[Y,X]}\alpha=\mathcal L_Y i_X \alpha - i_X \mathcal L_Y
\alpha=\mathcal L_Y 1-i_X(\lambda\alpha)=-\lambda,
$$
i.e., $Y$ is an infinitesimal automorphism of $\alpha$. Since
$\alpha$ is invariant by $\mathcal X$ and the orbits of $\mathcal
X$ are the fibers of the submersion \eqref{submersion}, the form
$\alpha$ is well defined on $U=\pi^{-1}(\pi(U_1))$ as well.

\medskip

{(ii)} The foliation $\mathcal F\vert_U$ is $\alpha$-complete if
and only if $\mathcal E\vert_U=\mathcal F^\perp\vert_U$ is an
integrable distribution.

From the identity
$$
0=\mathcal L_X \alpha=i_X d\alpha+ di_X\alpha=i_Xd\alpha
$$
we get that $X$ is the Reeb vector field of $\alpha$ on $U$.
Denote $Z=X$.

Let $X_1,\dots,X_r\in\mathcal X$ be vector fields such that
$Z,X_1,\dots,X_r$ span the foliation $\mathcal F\vert_U$.
Therefore, the corresponding contact Hamiltonians
$$
y_i=\Phi(X_i)=i_{X_i}\alpha
$$
are independent functions on $U$. Besides, $y_i$ are
$\pi$-vertical:
$$
0=i_{[X,X_i]}\alpha=\mathcal L_X i_{X_i} \alpha - i_{X_i} \mathcal
L_X \alpha=\mathcal L_X y_i,
$$
for all $X\in\mathcal X$.

The corank of the distribution $\mathcal E\vert_U$ is $r=2n-p$. It
is integrable and has $y_1,\dots,y_r$ as independent integrals.
Indeed, by definition we have
\begin{equation}\label{E_U}
\mathcal E_U=\langle X_f \,\vert \, f= \bar f \circ \pi,\, \bar
f\in C^\infty (\pi(U)) \rangle.
\end{equation}

Since $f=\bar f\circ\pi$ and $y_i$ are $\pi$-vertical we have, in
particular, $\mathcal L_Z f=\mathcal L_Z y_i=0$ (the differential
$df$ and $dy_i$ are semi-basic on $U$). Now, by using $\mathcal
L_{X_{y_i}} f=0$ and Lemma \ref{SB} we get
\begin{equation}\label{integrali}
\mathcal L_{X_f} y_i=0, \qquad i=1,\dots,r.
\end{equation}
The relations \eqref{E_U} and \eqref{integrali} prove the claim.
\end{proof}

\section{Noncommutative contact integrability}

\subsection{} Let us consider a contact vector field $X$ and a {\it contact equation}
\begin{equation}
\dot x=X
 \label{eq}
\end{equation}
on a $(2n+1)$-dimensional contact manifold $(M,\mathcal H)$.

First, recall a general definition of non-Hamiltonian
integrability (e.g., see \cite{Ko, Bo, Zu}), slightly adopted with
respect to the notations above. The equation \eqref{eq} is {\it
(non-Hamiltonian) completely integrable} if there is an open dense
subset $M_{reg}\subset M$ and a proper submersion
\begin{equation}\label{sub*}
\pi: M_{reg}\to W
\end{equation}
to a $p$-dimensional manifold $W$ and an Abelian Lie algebra
$\mathcal X$ of symmetries such that:
\begin{itemize}
\item[(i)] the contact vector field $X$ is tangent to the fibers
of $\pi$; \item[(ii)] the fibers of $\pi$ are orbits of $\mathcal
X$.
\end{itemize}

If \eqref{eq} is completely integrable then $M_{reg}$ is foliated
on $(r+1)$-dimensional tori with a quasi-periodic dynamics. In
nonholonomic mechanics, usually, an additional time
reparametrization is required (e.g., see \cite{Ko, FJ, Jo2}).

However, the above definition does not reflect the underlying
contact structure.

\begin{definition}{\rm
We shall say that the contact equation \eqref{eq} is {\it
noncommutatively contact completely integrable} if, in addition,
$(M_{reg},\mathcal H,\mathcal X)$ is a complete pre-isotropic
contact structure.}\end{definition}

The regularity of the dynamics of integrable contact systems is
described in the following statement.

\begin{theorem}\label{MAIN}
Suppose that the equation \eqref{eq} is noncommutatively contact
completely integrable by means of the submersion \eqref{sub*} and
commuting symmetries $\mathcal X$. Let $F$ be a connected
component of the fiber $\pi^{-1}(w_0)$. Then $F$ is diffeomorphic
to a $r+1$-dimensional torus $\mathbb T^{r+1}$, $r=2n-p$. There
exist an open $\mathcal X$-invariant neighborhood $U$ of $F$, an
$\mathcal X$-invariant contact contact form $\alpha$ on $U$ and a
diffeomorphism $\phi: U\to \mathbb T^{r+1} \times D$,
\begin{equation}\label{action-angle}
\phi(x)=(\theta,y,x)=(\theta_0,\theta_1,\dots,\theta_r,y_1,\dots,y_r,x_1,\dots,x_{2s}),
\quad s=n-r,
\end{equation}
where $D\subset \R^p$ is diffeomorphic to $W_U=\pi(U)$, such that

(i) $\mathcal F\vert_U$ is $\alpha$-complete foliation with
integrals $y_1,\dots,y_r,x_1,\dots,x_{2s}$, while the integrals of
the pseudo-orthogonal foliation $\mathcal E\vert_U=\mathcal
F\vert_U^\perp$ are $y_1,\dots,y_r$.

(ii) $\alpha$ has the following canonical form
\begin{equation}\label{canonical}
\alpha_0=(\phi^{-1})^*\alpha=y_0d\theta_0+y_1d\theta_1+\dots+y_rd\theta_r+g_1dx_1+\dots+g_{2s}
dx_{2s},
\end{equation}
where $y_0$ is a smooth function of $y$ and $g_i$ are functions of
$(y,x)$.

(iii) the flow of $X$ on invariant tori is quasi-periodic
\begin{equation}\label{namotavanje}
(\theta_0,\theta_1,\dots,\theta_r) \longmapsto
(\theta_0+t\omega_0,\theta_1+t\omega_2,\dots,\theta_r+t\omega_r),
\quad t\in\R,
\end{equation}
where frequencies $\omega_0,\dots,\omega_r$ depend only on $y$.
\end{theorem}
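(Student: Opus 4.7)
The plan is to combine Theorem 3 with a Liouville--Mishchenko--Fomenko--Nehoroshev style normal-form argument adapted to the contact setting. First I would apply Theorem 3 to obtain a saturated $\mathcal X$-invariant neighborhood $U$ of $F$ and an $\mathcal X$-invariant contact form $\alpha$ on $U$ whose Reeb field $Z$ belongs to $\mathcal X$ and for which $\mathcal F|_U$ is $\alpha$-complete. By Theorem 2 the commuting contact vector fields $Z,X_1,\ldots,X_r$, where $X_i=X_{y_i}$ and $y_1,\ldots,y_r$ are the independent integrals of $\mathcal E|_U=\mathcal F|_U^{\perp}$ constructed in the proof of Theorem 3, are infinitesimal automorphisms of $\alpha$ and span $\mathcal F|_U$. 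Their joint flow defines a locally free, transitive $\mathbb R^{r+1}$-action on the fiber $F$; compactness of $F$ (by properness of $\pi$) then forces the isotropy to be a full-rank lattice, so $F\cong\mathbb T^{r+1}$.

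I would then build the action-angle chart in the standard way. Choose a smooth basis $\gamma_0(w),\ldots,\gamma_r(w)$ of the period lattice $\Gamma(w)$ over a neighborhood $V$ of $\pi(F)$ in $W_U$ and a local section $\sigma:V\to U$. For $x$ near $F$ let $\theta(x)\in\mathbb R^{r+1}/2\pi\mathbb Z^{r+1}$ be the unique angle such that $x$ is reached from $\sigma(\pi(x))$ by the time $\tfrac{1}{2\pi}\sum_j\theta_j(x)\gamma_j(\pi(x))$ joint flow of $(Z,X_1,\ldots,X_r)$. Since $y_1,\ldots,y_r$ are independent on $W_U$, I complete them to coordinates $(y_1,\ldots,y_r,x_1,\ldots,x_{2s})$ on a shrinking of $V$ with the $\mathcal E$-leaves given by $\{y=\mathrm{const}\}$, and pull them back through $\pi$. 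Item (i) of the theorem then follows from Theorems 2 and 3.

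The core of the proof is establishing the canonical form of $\alpha$. Because $\alpha$ is $\mathcal X$-invariant, its coefficients do not depend on $\theta$, so
\[
\alpha=\sum_{j=0}^r A_j(y,x)\,d\theta_j+\sum_{i=1}^r B_i(y,x)\,dy_i+\sum_{k=1}^{2s}C_k(y,x)\,dx_k.
\]
Evaluating $i_Z\alpha=1$ and $i_{X_i}\alpha=y_i$ determines the $A_j$'s through a linear system whose matrix expresses $Z$ and the $X_i$'s in the frame $\partial/\partial\theta_j$; the involutivity relations in Theorem 2(ii) together with a $d\alpha$ computation should show that the $A_j$'s depend only on $y$, and a judicious $GL(r+1,\mathbb Z)$ change of the lattice basis $\gamma_j$ then normalizes them to $A_j=y_j$ for $j\geq 1$ and $A_0=y_0(y)$. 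To kill the $B_i\,dy_i$ terms I would perform a gauge transformation $\theta_j\mapsto\theta_j-f_j(y,x)$ (preserving $\mathcal X$-invariance and $2\pi$-periodicity), reducing matters to a linear PDE in the $f_j$'s whose compatibility is again supplied by $d\alpha=0$. This is the step I expect to be the main obstacle: it is the contact analog of the noncommutative Liouville lemma and requires careful bookkeeping of the Reeb direction and of the distinction between the $y$- and $x$-parts of the base.

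Finally, item (iii) is straightforward: $X$ is tangent to the $\pi$-fibers and preserves the $\pi$-pullbacks $y_i,x_k$, so on each torus $F$ it is a vector field with coefficients that are functions on $F$ alone. The contact condition $\mathcal L_X\alpha=\lambda\alpha$ restricted to $F$, where $\alpha|_F=y_0\,d\theta_0+\sum y_i\,d\theta_i$ has coefficients constant on $F$, forces $X|_F=\sum_{j=0}^r\omega_j\,\partial/\partial\theta_j$ with $\omega_j$ constant on $F$ and depending only on $y$, whence \eqref{namotavanje}.
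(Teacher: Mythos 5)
Your overall architecture matches the paper's: Theorem 3 supplies the $\mathcal X$-invariant form $\alpha$ with Reeb field in $\mathcal X$, the induced $\mathbb R^{r+1}$-action gives the tori and the angle chart, and one then analyses $\alpha$ in the coordinates $(\varphi,y,x)$. The genuine gap is at the step you yourself flag as the main obstacle: killing the $B_i\,dy_i$ terms. The shift $\theta_j=\varphi_j-f_j(y,x)$ turns $\sum_\nu y_\nu\,d\varphi_\nu$ into $\sum_\nu y_\nu\,d\theta_\nu+\sum_\nu y_\nu\,df_\nu$, so with the natural choice $f_i=B_i$ the unwanted terms do not cancel; they combine into an exact remainder $df$ with $f=\sum_i y_i f_i$. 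Your proposed ``linear PDE whose compatibility is supplied by $d\alpha=0$'' cannot be the way out, since $\alpha$ is a contact form and $d\alpha\neq0$, and no further change of the angle variables alone removes an exact semi-basic remainder while keeping the $d\theta_\nu$-coefficients equal to $y_\nu$. The paper disposes of $df$ by a Moser deformation: the time-one flow of $Y=-fZ$ (a genuine motion of points along the Reeb direction, not a relabelling of angles) carries $\alpha_0+df$ to $\alpha_0$ and yields \eqref{canonical}. This mechanism is absent from your plan.

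A second, smaller gap: you assert that $\mathcal X$-invariance makes the coefficients of $\alpha$ independent of $\theta$, and that the $d\theta_j$-coefficients ``should'' depend only on $y$. Since $Y_\nu=\partial/\partial\varphi_\nu=\sum_\mu\Lambda_{\nu\mu}X_\mu$ with $\Lambda$ a priori depending on $(y,x)$, the identity $\mathcal L_{Y_\nu}\alpha=0$ is not immediate from $\mathcal L_{X_\mu}\alpha=0$; the paper first shows that $i_{Y_\nu}d\alpha$ is semi-basic for the second fibration, deduces that the coefficients are linear and hence (by $2\pi$-periodicity) independent of $\varphi$, and only then invokes Lemma \ref{lema} to conclude that $\Lambda$, hence the actions $y_\nu=i_{Y_\nu}\alpha$ and the frequencies in \eqref{namotavanje}, depend on $y$ alone. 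That lemma (a contact field tangent to the fibers and commuting with $\mathcal X$ has contact Hamiltonian which is an integral of $\mathcal E$, hence a function of $y$) is also the input missing from your item (iii): restricting $\mathcal L_X\alpha=\lambda\alpha$ to a single torus gives at best constancy of the $\omega_j$ on that torus, not their independence of the $x$-variables.
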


\begin{definition}{\rm
We refer to local coordinates $(\theta,y)$ stated in Theorem 4 as
a {\it generalized contact action-angle coordinates}.
}\end{definition}

In the case when the contact manifold is co-oriented ($\mathcal
H=\ker\alpha$) and we have the contact Hamiltonian equation
\eqref{eq_f}, it is convenient to formulate noncommutative
integrability in terms of the first integrals and the Jacobi
bracket as well.

\begin{theorem}
Suppose we have a collection of integrals $f_1,f_2,\dots,f_{2n-r}$
of equation \eqref{eq_f} with the contact Hamiltonian either
$f=f_1$ or $f=1$, where:
\begin{equation}\label{involucija}
[1,f_i]=0, \quad [f_i,f_j]=0, \quad i=1,\dots,2n-r, \quad
j=1,\dots, r.
\end{equation}

Let $F$ be a compact connected component of the level set
$$
\{x\,\vert\,f_1=c_1,\dots,f_{2n-r}=c_{2n-r}\}
$$
and assume
\begin{equation}\label{regular}
df_1 \wedge \dots \wedge df_{2n-r}\ne 0
\end{equation}
on $F$. Then $F$ is diffeomorphic to a $r+1$-dimensional torus
$\mathbb T^{r+1}$. There exist a neighborhood $U$ of $F$ with
local generalized action-angle coordinates \eqref{action-angle} in
which $\alpha$ has the form \eqref{canonical} and the dynamics is
quasi-periodic \eqref{namotavanje}.
\end{theorem}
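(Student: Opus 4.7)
The plan is to reduce Theorem~5 to Theorem~4 by extracting a complete pre-isotropic contact structure from the given integrals. Let $M_{reg}\subset M$ be the open set where \eqref{regular} holds; by hypothesis $F\subset M_{reg}$. The map $\pi=(f_1,\dots,f_{2n-r}):M_{reg}\to\mathbb R^{2n-r}$ is a submersion whose fibers have dimension $2n+1-(2n-r)=r+1$, and after shrinking $M_{reg}$ to a suitable $\mathcal X$-invariant tubular neighborhood of $F$, I may assume $\pi$ is proper with connected fibers, of which $F$ is one. I will then exhibit an Abelian Lie algebra $\mathcal X$ of contact symmetries whose orbits are precisely those fibers, so that $(M_{reg},\mathcal H,\mathcal X)$ meets Definition~3.

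The construction of $\mathcal X$ proceeds from the involution hypotheses. The relation $[1,f_i]=0$ is equivalent to $\mathcal L_Z f_i=0$, i.e.\ $df_i$ is semi-basic, which in turn means $X_{f_i}$ is an infinitesimal automorphism of $\alpha$ and hence of $\mathcal H$; the same holds for $Z=X_1$. The commutations $[f_i,f_j]=0$ for $i\leq r$ translate, via the Lie algebra isomorphism $\Phi$, into $[X_{f_i},X_{f_j}]=0$, while $[1,f_i]=0$ gives $[Z,X_{f_i}]=0$. Let $\mathcal X$ be the Abelian Lie algebra spanned over $\mathbb R$ by $Z,X_{f_1},\dots,X_{f_r}$. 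Each generator is tangent to the fibers of $\pi$: for all $j\leq 2n-r$, \eqref{der} gives $\mathcal L_{X_{f_i}} f_j=[f_i,f_j]+f_j\mathcal L_Z f_i=0$ (using $i\leq r$), and $\mathcal L_Z f_j=0$.

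The key step is pointwise linear independence of $Z,X_{f_1},\dots,X_{f_r}$ on $M_{reg}$. If $c_0 Z|_x+\sum_{i=1}^r c_i X_{f_i}|_x=0$ for constants $c_i$, set $h=c_0+\sum_i c_i f_i$, so that $X_h|_x=0$. Writing $X_h=hZ+\alpha^\sharp(\widehat{dh})$ and noting that $dh=\sum c_i df_i$ is semi-basic (hence $\widehat{dh}=dh$), the decomposition \eqref{decomposition} together with the injectivity of $\alpha^\sharp$ on semi-basic forms forces $h(x)=0$ and $\sum c_i df_i|_x=0$; regularity \eqref{regular} then yields $c_1=\cdots=c_r=0$, and subsequently $c_0=0$. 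Thus $\mathcal X$ spans the whole tangent space to every fiber of $\pi$, and by connectedness of the fibers the orbits of $\mathcal X$ coincide with them. To check that $\mathcal G=\mathcal F\cap\mathcal H$ is isotropic, observe that $\mathcal G$ is spanned by the horizontal parts $\hat X_{f_i}=X_{f_i}-f_i Z$ ($i\leq r$), and since $i_Z d\alpha=0$, Lemma~\ref{SB} gives
$$
d\alpha(\hat X_{f_i},\hat X_{f_j})=d\alpha(X_{f_i},X_{f_j})=[f_i,f_j]=0
$$
for $i,j\leq r$. Hence $(M_{reg},\mathcal H,\mathcal X)$ is a complete pre-isotropic contact structure.

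With this structure in hand, Theorem~4 applies directly: the vector field $X=X_f$ with $f\in\{1,f_1\}$ is tangent to the fibers of $\pi$ by the involution, so the conclusions of Theorem~4 produce the diffeomorphism $F\cong\mathbb T^{r+1}$, the generalized action-angle coordinates \eqref{action-angle}, the canonical form \eqref{canonical}, and the quasi-periodic flow \eqref{namotavanje}. I expect the main obstacle to be the pointwise independence argument above: a linear relation among contact Hamiltonian vector fields only directly controls the sum $c_0+\sum c_i f_i$ as a function, and one must unfold the decomposition $X_h=hZ+\alpha^\sharp(\widehat{dh})$ and exploit the semi-basic nature of the $df_i$ before the regularity hypothesis \eqref{regular} can be invoked to conclude.
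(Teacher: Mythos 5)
Your proposal is correct and follows essentially the same route as the paper: build the submersion $\pi=(f_1,\dots,f_{2n-r})$, show that $Z,X_{f_1},\dots,X_{f_r}$ form an Abelian algebra of infinitesimal automorphisms of $\alpha$ spanning the fibers, verify pre-isotropy, and invoke Theorem~4. Your detailed pointwise-independence argument via $X_h=hZ+\alpha^\sharp(\widehat{dh})$ and your direct computation $d\alpha(\hat X_{f_i},\hat X_{f_j})=[f_i,f_j]=0$ are just expanded versions of the paper's appeals to $df_1\wedge\dots\wedge df_{2n-r}\wedge\alpha\ne0$ and to Theorem~2(i), respectively.
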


\begin{proof} Consider the mapping
$$
\pi=(f_1,\dots,f_{2n-r}): \quad M\to\R^{2n-r}.
$$

From \eqref{regular} there exist a neighborhood $U$ of $F$ such
that $\pi\vert_U$ is a proper submersion to $\pi(U)$. Let
$\mathcal F$ be a foliation with leaves that are fibers of $\pi$.
Since $df_i$ are semi-basic 1-forms, \eqref{regular} implies $df_1
\wedge \dots \wedge df_{2n-r}\wedge \alpha \ne 0$. Thus, $\mathcal
F$ is transversal to $\mathcal H\vert_U$ and the infinitesimal
automorphisms of $\alpha$
\begin{equation}\label{simetrije}
Z,X_{f_1},\dots,X_{f_{r}}
\end{equation}
are independent in
$U$.

Further, from \eqref{der} and \eqref{involucija}, we conclude
\begin{eqnarray}
&&[Z, X_{f_i}]=0, \quad [X_{f_i},X_{f_j}]=0, \quad i=1,\dots,2n-r, \quad j=1,\dots,r \nonumber\\
&&\mathcal L_Z f_i=0, \quad L_{X_{f_j}} f_i=0, \quad \mathcal
L_{X_{f_i}} f_j=0, \label{lepe}
\end{eqnarray}

The relations \eqref{lepe} provide that the commuting vector
fields \eqref{simetrije} belong to $\mathcal F$. From the
dimensional reason, they span $\mathcal F$. From \eqref{lepe} we
also get that $f_1,\dots,f_r$ are integrals of the
pseudo-orthogonal distribution $\mathcal E=\mathcal F^\perp$.
Whence $\mathcal E$ is integrable. On the other hand, $\mathcal
F\subset \mathcal E$ implies that the distribution $\mathcal
G=\mathcal F\cap\mathcal H$ is isotropic (item (i) of Theorem 2).

Therefore, $\mathcal F$ is a complete pre-isotropic foliation with
commuting symmetries \eqref{simetrije}. Now, the statement follows
from Theorem 4.
\end{proof}

\begin{proof}[Proof of Theorem \ref{MAIN}]
{\it Step 1 (local bi-fibrations).} Since on each connected
component of the fiber $\pi^{-1}(w_0)$, $\mathcal X$ induces a
transitive action of $\R^{r+1}$ ($r=2n-p$), the connected
components of $\pi^{-1}(w_0)$ are $r+1$-dimensional tori $\mathbb
T^{r+1}$ (e.g., see Arnold \cite{Ar}).

Let us fix some connected component $F$ of $\pi^{-1}(w_0)$.
Consider some $\mathcal X$-invariant connected neighborhood $U$ of
$F$ and a $\mathcal X$-invariant contact form $\alpha$ defining
the distribution $\mathcal H\vert_U=\ker\alpha$ such that the
corresponding Reeb vector field $Z$ belongs to $\mathcal X$ (see
the construction given in Theorem 3).

Let $y'_i=i_{X_i}\alpha$ be contact Hamiltonians of $r$
independent contact vector fields $X_i\in\mathcal X$, $\mathcal
F\vert_U=\langle Z,X_1,\dots,X_r\rangle$. The functions
$y'_1,\dots,y'_r$ are then integrals of the pseudo-orthogonal
foliation  as well (see the proof of Theorem 3). They are
$\pi$-vertical, and by $\bar y'_i$ we denote the corresponding
functions on $W_U=\pi(U)$. Locally, for $U$ small enough, the
foliation $\mathcal E\vert_U$ is also a fibration $\rho_U$ over an
open set $V_U$ diffeomorphic to a ball in $\R^r$ with local
coordinates $\bar y'=(\bar y'_1,\dots,\bar y'_r)$ (the using of
$\bar y'_i$ will be clear from the contexts). Therefore we have a
bi-fibration
$$
\begin{matrix}
 & & U & & \\
 &   \swarrow_{\pi_U}& & _{\rho_U}\searrow  &  \\
 W_U  &&&& V_U
\end{matrix}
$$
with pseudo-orthogonal fibers $\mathcal F\vert_U$ and $\mathcal
E\vert_U$.

Let $\bar x=(\bar x_1,\dots,\bar x_{2s})$ be any collection of
independent functions, where $(\bar y',\bar x)$ are local
coordinates on $W_U$. Let $x_a=\bar x_a\circ \pi_U$,
$a=1,\dots,2s$. By the use of the methods developed by Arnold
\cite{Ar}, it follows that locally we have a trivial toric
fibration $U\cong \mathbb T^{r+1}\times W_U$ with coordinates
$$
(\varphi_0,\dots,\varphi_r,y'_1,\dots,y'_r,x_1,\dots,x_{2s}).
$$
The angular variables $(\varphi_0,\dots,\varphi_r)$ are chosen
such that
$$
Y_\nu={\partial}/{\partial \varphi_\nu}=\sum_{\mu=0}^r
\Lambda_{\nu\mu} X_\mu,
$$
where the Reeb vector field $Z$ of $\alpha$ is denoted by $X_0$
and the invertible matrix $(\Lambda_{\nu\mu})\in GL(r+1)$ depends
only on $(y',x)$.

\medskip

{\it Step 2 (description of $\alpha$).} By construction, the
functions $y'_j=i_{X_j}\alpha$ are $\rho_U$-basic. Since $\mathcal
L_{X_j}\alpha=0$, the 1-forms
$$
i_{X_{j}} d\alpha=-d(\alpha(X_j))=-dy'_j, \qquad j=1,\dots,r
$$
are also $\rho_U$-basic.\footnote{Let $\pi: M\to P$ be a
surjective submersion. A 1-form $\omega$ is {\it semi-basic} if
$i_X\omega=0$ for all vertical vector fields $X$. It is {\it
basic} if $\omega=\pi^*\mu$, where $\mu$ is a 1-form on $P$. In
particular, a basic form is semi-basic as well \cite{LM}.}
Besides, $i_{X_0}d\alpha=i_Zd\alpha=0$. Therefore
\begin{equation}\label{Y}
i_{Y_{\nu}}d\alpha=\sum_{\mu=0}^r \Lambda_{\nu\mu} i_{X_\mu}
d\alpha=-\sum_{\mu=0}^r \Lambda_{\nu\mu}dy'_\mu, \qquad
\nu=0,1,\dots,r
\end{equation}
are $\rho_U$-{semi basic} 1-forms. Here $y'_0\equiv 1$. In
particular, $d\alpha$ does not contain the terms with
$d\varphi_\nu\wedge d\varphi_\mu$. So $\alpha$ takes the form
\begin{equation}\label{local-a}
\alpha=\sum_{\nu=0}^r y_\nu d\varphi_\nu+\sum_{i=1}^r \tilde
f_idy'_i+\sum_{a=1}^{2s}{\tilde g}_a dx_a,
\end{equation}
where $y_\nu=y_\nu(y',x)$, $\nu=0,\dots,r$. Thus, it follows
\begin{equation}\label{YY}
 i_{Y_\nu}
d\alpha=-dy_\nu+\sum_{i=1}^r\frac{\partial \tilde f_i}{\partial
\varphi_\nu}dy'_i +\sum_{a=1}^{2s}\frac{\partial{\tilde
g}_a}{\partial \varphi_\nu}dx_a.
\end{equation}

By combining \eqref{Y}, \eqref{YY} and the fact that the matrix
$(\Lambda_{\nu\mu})$ does not depend on $\varphi$, we obtain that
$\tilde f_i$ and ${\tilde g}_a$ are linear in angular variables.
Since they are periodic in $\varphi_\nu$, they depend only on
$(y',x)$ and
\begin{equation}\label{Y*}
 i_{Y_\nu}
d\alpha=-dy_\nu.
\end{equation}

From \eqref{local-a} and \eqref{Y*} we find the Lie derivatives
\begin{equation*}\label{ne-inv}
\mathcal L_{Y_{\nu}}\alpha=i_{Y_\nu}
d\alpha+di_{Y_\nu}\alpha=-dy_\nu+dy_\nu=0,\qquad \nu=1,\dots,r
\end{equation*}
and conclude that $\alpha$ is invariant with respect to the angle
coordinates vector fields
${\partial}/{\partial\varphi}_\nu=Y_\nu$.

Now, according to Lemma \ref{lema}, the matrix
$(\Lambda_{\nu\mu})$ depends only on $y'$-variables. Therefore,
the 1-forms $i_{Y_\nu}d\alpha$  (see \eqref{Y})  as well as the
functions $y_\nu$ (see \eqref{Y*}) are $\rho_U$-basic. Note that
$y_\nu=i_{Y_\nu}\alpha$ are contact Hamiltonians of the contact
vector fields $Y_\nu$.

Among $y_\nu$ there are $r$ independent functions at every point
in $U$. With eventually shrinking of $U$ and a permutation of
indexes, we can assume that $y_1,\dots,y_r$ are independent and
$y_0=y_0(y_1,\dots,y_r)$ (i.e., $\bar y_1,\dots,\bar y_r$ are new
coordinates on $V_U$). As a result, the contact form reads
\begin{equation}\label{local-a*}
\alpha=\sum_{\nu=0}^r y_\nu d\varphi_\nu+\sum_{i=1}^r
f_i(y,x)dy_i+\sum_{a=1}^{2s} g_a(y,x)dx_a.
\end{equation}

Introducing the new angle variables
\begin{equation}\label{translation}
(\theta_0,\theta_1,\dots,\theta_r)=(\varphi_0,\varphi_1-f_1(y,x),\dots,\varphi_r-f_r(y,x)),
\end{equation}
the form \eqref{local-a*} becomes
$$
\alpha=\sum_{i=0}^r y_id\theta_i+\sum_{a=1}^{2s} g_a(y,x)dx_a +df,
$$
where $f=f(y,x)=\sum_{i=1}^r y_i f_i(y,x)$ is a $\pi_U$-basic
function. Due to the translation \eqref{translation}, the
coordinate vector fields of $\theta$ and $\varphi$ coincide:
${\partial}/{\partial
\theta_\nu}={\partial}/{\partial\varphi_\nu}=Y_\nu$.

\medskip

{\it Step 3 (Moser's deformation, see e.g., \cite{BM, Ge}).} Let
$$
\alpha_0=\sum_{\nu=0}^r y_ \nu d\theta_\nu+\sum_{a=1}^{2s}
g_a(y,x)dx_a.
$$
and $Z=X_0$ be the Reeb vector field of $\alpha$. It is
$\pi_U$-vertical and we have
$$
i_Z \alpha = i_Z \alpha_0=1, \qquad i_Z d\alpha=i_Z d\alpha_0=0,
$$
implying $\mathcal L_Z \alpha=\mathcal L_Z \alpha_0=0$.

Following \cite{BM},  consider the vector field $Y=-fZ$, where $f$
is the $\pi_U$-basic function defined above. The flow $\phi_t$ of
$Y$ is a complete flow that preserves the toric fibration. Define
$\alpha_t=\alpha_0+tdf$. Then we have
$$
\mathcal L_Y \alpha_t =\mathcal L_Y \alpha_0+t\mathcal L_Y
h=\mathcal L_Y
\alpha_0=i_Yd\alpha_0+d(i_Y\alpha_0)=-df=-\partial\alpha_t/\partial
t.
$$

Thus
$$
\frac{d}{dt}(\phi_t^*\alpha_t)=\phi^*_t(\mathcal L_Y
\alpha_t+\frac{\partial\alpha_t}{\partial t})=0,
$$
which implies that $\phi_1^* \alpha_1=\phi_1^*\alpha=\alpha_0$.
Finally,  the required change of variables is  $\phi=\phi_{-1}$.

\medskip

{\it Step 3 (linearization).} Since the system is non-Hamiltonian
completely integrable, we have a quasi-periodic motion on
invariant tori \cite{Bo, Zu}. The special form of a linearization,
where frequencies depend only on $y_1,\dots,y_r$ follows from
Lemma \ref{lema} below. \end{proof}

\begin{remark}\label{action-remark}{\rm
The action functions $y_\nu=i_{Y_\nu}\alpha$ constructed above
have an another interesting interpretation. Let $\gamma_\nu(T)$ be
a cycle homologous to the trajectories of the field
${\partial}/\partial\theta_{\nu}$ restricted to any invariant
torus $T$ within $U$. Then it follows
\begin{equation}\label{actions}
y_\nu\vert_T=\frac{1}{2\pi}\int_{\gamma_\nu(T)} \alpha.
\end{equation}
Indeed, since $d\alpha\vert_T=0$ (the tangent space of $T$ splits
into an isotropic horizontal part and $\R Z=\ker d\alpha$) the
value of the integral \eqref{actions} is the same for all
$\gamma_\nu(T)$ in the same homology class. Then \eqref{actions}
simply follow from \eqref{canonical}. In the opposite direction,
we can use \eqref{actions} as a definition of $y_\nu$. By
construction, the functions $y_\nu$ are $\pi_U$-vertical. As in
the symplectic case (see Nehoroshev \cite{N}), it can be proved
that they are also $\rho_U$-vertical. }\end{remark}

\begin{remark}\label{REEB}{\rm Let
$ Z=z_0(y)Y_0+\dots+z_r(y)Y_r$ be the local expression of the Reeb
vector field. It is uniquely determined from the conditions
$i_Z\alpha_0=1$, $i_Z d\alpha_0=0$, i.e.,
\begin{equation}\label{z}
z_0y_0+\dots+z_ry_r=1, \qquad z_0 dy_0+\dots+z_r dy_r=0.
\end{equation}

If $z_0=0$ at some point $y=\tilde y$, then $z_1 dy_1+\dots+z_r
dy_r=0$ at $\tilde y$. Since $dy_i$, $i=1,\dots,r$ are independent
1-forms, we get $z_1=\dots=z_r=0$ at $\tilde y$ which contradict
\eqref{z}. Therefore $z_0\ne 0$ on $V_U$. Now, by solving
\eqref{z} we get
$$
z_0=\frac{1}{y_1\frac{\partial y_0}{\partial
y_1}+\dots+y_r\frac{\partial y_0}{\partial y_r}-y_0}, \quad
z_i=-\frac{1}{z_0}\frac{\partial y_0}{\partial y_i}, \qquad
i=1,\dots,r.
$$
Therefore, typically, the flow of the Reeb vector field is
quasi-periodic and everywhere dense in invariant tori. Also,
typically, the induced pseudo-isotropic foliation $\mathcal
G=\mathcal F \cap \mathcal H$ has noncompact invariant manifolds.
}\end{remark}

\begin{remark}{\rm
Consider the 1-form $\gamma=\sum_{a=1}^{2s}
g_a(y,x)dx_a=\alpha_0-\sum_{\nu=0}^r y_\nu d\theta_\nu$. Since
$d\alpha_0$ has the maximal rank, according to Darboux's theorem
\cite{LM}, there is a coordinate transformation $q_j=q_j(y,x),
p_j=p_j(y,x)$, $j=1,\dots,s$ such that $\gamma=p_1 dq_1+\dots+p_s
dq_s$, i.e.,
$$
\alpha_0=y_0d\theta_0+y_1d\theta_1+\dots+y_rd\theta_r+p_1dq_1+\dots+p_s
dq_s.
$$
}\end{remark}

\begin{lemma}\label{lema}
Let $(M,\mathcal H,\mathcal X)$ be a complete pre-isotropic
contact structure and  let $U\subset M$ be an $\mathcal
X$-invariant set endowed with an $\mathcal X$-invariant contact
form $\alpha$. Suppose

(i) The foliation $\mathcal F\vert_U=\ker\pi_*\vert_U$ is
$\alpha$-complete and there exist everywhere independent integrals
$y_1,\dots,y_r: U\to\R$, of the pseudo-orthogonal foliation
$\mathcal E\vert_U=\mathcal F\vert_U^\perp$.

(ii) Let $X$ be a contact vector field tangent to the fibers of
$\pi_U$, commuting with $\mathcal X$.

Then $X$ can be written as a fiber-wise linear combination
$$
X=f_0 Z+f_1 X_1+\dots+ f_r X_r,
$$
where functions $f_0,\dots,f_r$ depend only on $y$, $Z$ is the
Reeb vector field of $\alpha$ and $X_i=X_{y_i}$ are contact
Hamiltonian vector fields of $y_i$, $i=1,\dots,r$.\end{lemma}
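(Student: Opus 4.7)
The plan is to expand $X$ in the pointwise commuting frame $(Z, X_1, \ldots, X_r)$ for $\mathcal F\vert_U$ provided by Theorem 2, and then extract from the contact condition $\mathcal L_X\alpha = \lambda\alpha$, together with the commutativity of $X$ with $\mathcal X$, constraints that force the coefficients to depend only on $y$. By Theorem 2(iii) applied to the $\alpha$-complete foliation $\mathcal F\vert_U$, the contact vector fields $Z, X_1=X_{y_1}, \ldots, X_r=X_{y_r}$ pairwise commute and span $\mathcal F\vert_U$. Since $X$ is tangent to the fibers of $\pi_U$, there is a unique decomposition
\begin{equation*}
X = f_0 Z + \sum_{j=1}^r f_j X_j
\end{equation*}
with smooth coefficients $f_\mu$ on $U$; using $i_Z\alpha=1$ and $i_{X_j}\alpha = y_j$, the contact Hamiltonian of $X$ reads $h = i_X\alpha = f_0 + \sum_{j=1}^r y_j f_j$.

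Next, I compute $\mathcal L_X\alpha$ via the Leibniz identity $\mathcal L_{gY}\alpha = g\mathcal L_Y\alpha + (i_Y\alpha)\,dg$. Each $X_\mu$ is an infinitesimal automorphism of $\alpha$ ($Z$ by definition of the Reeb, and $X_j$ because $dy_j$ is semi-basic, as $y_j$ is an integral of $\mathcal E \supset \mathcal Z$), so $\mathcal L_{X_\mu}\alpha = 0$ and one obtains $\mathcal L_X\alpha = df_0 + \sum_{j=1}^r y_j\,df_j$. Imposing $\mathcal L_X\alpha = \lambda\alpha$ and contracting with $Z$, using $[X, Z] = 0$ --- which holds because the Reeb $Z$ lies in $\mathcal X$ in the framework of Theorems 3 and 4 --- gives
\begin{equation*}
\lambda = i_Z(\mathcal L_X\alpha) = \mathcal L_X(i_Z\alpha) - i_{[X,Z]}\alpha = \mathcal L_X(1) = 0,
\end{equation*}
so $\mathcal L_X\alpha = 0$ and the key identity $df_0 + \sum_{j=1}^r y_j\,df_j = 0$ holds on $U$.

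Finally, adding $\sum_{j=1}^r f_j\,dy_j$ to both sides of the key identity rewrites the left-hand side as $d(f_0 + \sum_j y_j f_j) = dh$, yielding $dh = \sum_{j=1}^r f_j\,dy_j$. As the $dy_j$ are pointwise linearly independent by hypothesis, $y_1, \ldots, y_r$ extend to a local coordinate system on $U$; this equality then forces $h$ to be a function of $y = (y_1, \ldots, y_r)$ alone with $f_j = \partial h/\partial y_j$ for $j = 1, \ldots, r$, and hence $f_0 = h - \sum_j y_j f_j$ also depends only on $y$. The main obstacle is the middle step: the vanishing of $\lambda$ rests on $[X, Z] = 0$, which is automatic in the intended application but must be cited from the ambient setup; if one wished to work from the stated hypotheses alone, one could instead pick any $Y \in \mathcal X$ with $i_Y\alpha$ nowhere vanishing and note that $i_Y\alpha$ is simultaneously $\mathcal X$-invariant and, because $X$ is tangent to the fibers of $\pi_U$, also $X$-invariant, so that $\lambda\,i_Y\alpha = 0$ forces $\lambda = 0$.
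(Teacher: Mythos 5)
Your proof is correct, and it takes a genuinely different route from the paper's, although both arguments ultimately rest on the same two facts: that $X$ preserves $\alpha$ and that its contact Hamiltonian is an integral of $\mathcal E\vert_U$. The paper does not begin with a frame expansion. It first proves $[X,Z]=0$ by a two-step argument valid under the stated hypotheses alone (each $Y\in\mathcal X$ preserves $\alpha$ and hence commutes with $Z$; then any $\pi$-vertical field commuting with $\mathcal X$ --- in particular $Z$ --- commutes with $X$, since locally it is a combination of elements of $\mathcal X$ with $\pi$-basic coefficients killed by the vertical field $X$). From $[Z,X_f]=0$ it deduces that $df$ is semi-basic, invokes Lemma 1 to conclude that $f=i_X\alpha$ is an integral of $\mathcal E\vert_U$, hence $f=f(y)$, and only then obtains the frame decomposition by expanding $X=\Phi^{-1}(f)=fZ+\alpha^\sharp(df)$ via the chain rule. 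You instead posit the expansion with unknown coefficients, compute $\mathcal L_X\alpha=df_0+\sum_j y_j\,df_j$ by the Leibniz rule, show the conformal factor vanishes, and read off $dh=\sum_j f_j\,dy_j$; this replaces the Jacobi-bracket machinery by a direct differential-form computation and identifies the coefficients in one stroke. One caveat: your primary justification of $[X,Z]=0$ (``$Z$ lies in $\mathcal X$'') is \emph{not} available under the lemma's hypotheses --- the paper's footnote stresses that the lemma is deliberately stated without that assumption precisely so it can be applied in Proposition 2, where the relevant Reeb field is $X_a^{\alpha_0}\notin\mathcal X$ --- so your fallback argument ($i_Y\alpha$ is $\mathcal X$-invariant, hence $\pi$-basic, hence annihilated by the vertical field $X$, forcing $\lambda=0$ wherever $i_Y\alpha\ne0$, with transversality of $\mathcal F$ to $\mathcal H$ supplying such a $Y$ near every point) is the one that must carry the proof; it does. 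The final step, from $dh=\sum_j f_j\,dy_j$ to ``$h$ depends only on $y$,'' is the statement that $h$ is an integral of $\mathcal E\vert_U$ and carries exactly the same (local) content as the corresponding assertion in the paper.
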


\begin{proof} Under the assumption (i), $Z,X_1,\dots,X_r$ are independent
vector fields that generate $\alpha$-complete pre-isotropic
foliation $\mathcal F\vert_U$.

Next, we shall prove that $X$ commute with $Z$. Firstly, notice
that $Z$ commute with $\mathcal X$.\footnote{Here we consider
slightly more general situation then it is needed for Theorem 4,
where, by construction of $\alpha$, $Z$ is already an element of
$\mathcal X$. However, we shall use the above formulation for a
proof of Proposition \ref{reeb}.} Indeed, let $Y\in\mathcal X$. We
have
\begin{equation}\label{pomoc3}
\Phi([Y,Z])=i_{[Y,Z]}\alpha=\mathcal L_Y i_{Z} \alpha - i_{Z}
\mathcal L_Y \alpha=0.
\end{equation}
Since \eqref{iso} is an isomorphism we get $[Y,Z]=0$.

Secondly, note that any $\pi$-vertical vector field $K$ (not need
to be contact field) that commute with $\mathcal X$, commute with
$X$ as well. Indeed, any point in $U$ has a $\pi$-invariant
neighborhood $U'$ where $K$ can be written as a linear combination
$\sum_{\nu=1}^r g_\nu Y_\nu$ where $g_\nu$ are $\pi$-basic
functions and $(Y_0,\dots,Y_r)$ is a collection of vector fields
in  $\mathcal X$ that generate $\mathcal F\vert_U$. Therefore
$$
[X,K]=\sum_{\nu=0}^r [X,g_\nu
Y_\nu]=\sum_{\nu=0}^r(g_\nu[X,Y_\nu]+dg_\nu(X) Y_\nu)=0.
$$

From the above considerations it follows that $X$ commute with
$Z$. Let $f=i_X \alpha$ be the contact Hamiltonian of $X$. Since
$[Z,X_f]=0$ we have $[1,f]=0$ and $df$ is a semi-basic form. Since
$X$ is $\pi_U$-vertical, we have $\mathcal L_{X_f} g=0$, where $g$
is any local integral of $\mathcal F$. It is clear that $dg$ is
semi-basic and applying Lemma \ref{SB} again, it follows $\mathcal
L_{X_g} f=0$. Whence $f$ is an integral of the pseudo-orthogonal
foliation $\mathcal E\vert_U$.

Under the assumptions of Lemma \ref{lema}, integrals of $\mathcal
E\vert_U$ are functions of $y$ and we have $f=f(y)$. Let
$f_i={\partial f}/{\partial y_i}$, $i=1,\dots,r$. The forms
$df,dy_1,\dots,dy_r$ are semi-basic, so
\begin{eqnarray*}
X &=& \Phi^{-1}(f)= fZ+\alpha^\sharp(df)=f Z+\sum_{i=1}^r
f_i\alpha^{\sharp}(dy_i)\\
&=& f Z + \sum_{i=1}^r f_i(X_i-y_i Z)= f_0 Z+f_1 X_1+\dots+ f_r
X_r,
\end{eqnarray*}
where $f_0=f-(y_1f_1+\dots+y_rf_r)$. \end{proof}

\begin{remark}{\rm
Let $X$ be $\pi_U$-horizontal contact vector field. From the proof
of the lemma, we see that commuting of $X$ with $\mathcal X$ is
equivalent to the commuting with the Reeb vector field $Z$, i.e,
with the condition that $X$ is an infinitesimal automorphisms of
$\alpha$. Also, the condition that $\mathcal F\vert_U$ is
$\alpha$-complete is equivalent to the condition that  $Z$ is a
section of $\mathcal F\vert_U$, see Proposition \ref{reeb} given
below. }\end{remark}

\subsection{Discrete systems}

Khesin and Tabachnikov  defined integrability of discrete
\begin{equation}\label{discrete}
\Psi: M\to M,
\end{equation}
and continuous contact systems \eqref{eq} in terms of the
existence of an invariant complete pre-Legendrian foliation
$\mathcal F$, with additional property that on every leaf $F$ of
$\mathcal F$, the foliation $\mathcal G\vert_F$ has a holonomy
invariant transverse smooth measure. It turns out that this
condition implies the existence of a global contact form $\alpha$
and that $\mathcal G$ is an $\alpha$-complete Legendrian foliation
\cite{KT}.

As in \cite{KT}, we can say that a discrete contact system
\eqref{discrete} that preserves the contact form $\alpha$ is {\it
integrable in a noncommutative sense} if it possesses an
$\alpha$-complete pre-isotropic invariant foliation $\mathcal F$.
Also, following the lines of the proof of  Lemma 3.5 \cite{KT},
one can prove that $\alpha$ determines a holonomy invariant
transverse smooth measure of the foliation $\mathcal G=\mathcal
F\cap \mathcal H$ restricted to the leaves of $\mathcal F$.

\subsection{Examples}

For $s=0$, Theorem 4 recover contact action-angle coordinates
given by Banyaga and Molino \cite{BM}. If $M$ is a compact
manifold with a regular effective contact action of $\mathbb
T^{n+1}$, then $W$ is the sphere $S^{n}$ and for $n \ge 3$, $M$ is
diffeomorphic to $\mathbb T^{n+1}\times S^{n}$ (see Lutz
\cite{Lu}).

Besides noncommutatively integrable geodesic flow restricted to
the unit co-sphere bundles \cite{BJ, Jo1}, a natural class of
examples of contact flows integrable in a noncommutative sense are
the Reeb flows on $K$-contact manifolds $(M^{2n+1},\alpha)$ where
the rank of the manifold is less then $n+1$ (see Yamazaki
\cite{Y1} and Lerman \cite{Le2}).

The regular and almost regular contact manifolds studied by
Boothby and Wang \cite{BW} and Thomas \cite{Th} provide the most
degenerate examples with $\dim W=\dim M-1$.

The billiard system within an ellipsoid in the Euclidean space
$\R^n$ is one of the basic examples of integrable mappings (e.g.,
see \cite{Ve, DrRa}). Similarly, the billiard system inside an
ellipsoid in the pseudo-Euclidean space $\mathbb R^{k,n-k}$ is
completely integrable as well. Here, the billiard system is
described by a symplectic transformation on the spaces of
space-like and time-like geodesics, while it is a contact
transformation on the space of light-like geodesics (for more
details, see Khesin and Tabachnikov \cite{KT0, KT}). The
considered billiard systems are defined within ellipsoids with
different semi-axis. Further properties of ellipsoidal billiards
in the pseudo-Euclidean spaces have been studied in \cite{DR2},
where description of periodical trajectories has been derived,
including the cases of symmetric ellipsoids. It can be proved that
the billiard systems, both in $\R^n$ and $\R^{k,n-k}$, within
symmetric ellipsoids are completely integrable in the
noncommutative sense (the geodesic flow on a symmetric ellipsoid
is considered in \cite{BD}). In particular, the billiard maps
restricted to the space of null geodesics are noncommutatively
completely integrable contact transformations.

\section{Complete pre-isotropic structures of the Reeb type}

\subsection{}
In this section we consider some global properties of the
fibration \eqref{submersion}.

\begin{proposition} Let $(M,\mathcal H,\mathcal X)$ be a
complete pre-isotropic contact structure and assume that $\mathcal
H$ is co-oriented. Then there exist a global contact form $\alpha$
representing $\mathcal H$ and invariant by elements of $\mathcal
X$.
\end{proposition}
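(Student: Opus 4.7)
The plan is to globalize the local construction given in Theorem 3 via a partition of unity argument carried out on the base $W$ (rather than on $M$ directly), so that the resulting smooth cutoffs are automatically $\mathcal X$-invariant.

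First, I would use the co-orientation of $\mathcal H$ to fix a preferred orientation of the line bundle $TM/\mathcal H$. This distinguishes, on every open set, the \emph{positive} contact forms for $\mathcal H$ from their negatives. The key property I need is that any locally finite convex combination of positive contact forms with at least one strictly positive coefficient at each point is again a (nowhere vanishing) positive contact form for $\mathcal H$. By Theorem 3, each point of $M$ lies in an $\mathcal X$-invariant open neighborhood of the form $\pi^{-1}(V)$, with $V$ open in $W$, carrying an $\mathcal X$-invariant 1-form whose kernel is $\mathcal H$; replacing such a form with its negative if necessary (this operation preserves $\mathcal X$-invariance), I obtain on each such $\pi^{-1}(V)$ a \emph{positive} $\mathcal X$-invariant contact form $\beta_V$.

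The second step is the patching. Since $W$ is paracompact, I would choose a locally finite refinement $\{V_i\}$ of the resulting cover of $W$ together with a subordinate smooth partition of unity $\{\rho_i\}$ on $W$, and define
$$
\alpha = \sum_i (\rho_i \circ \pi)\,\beta_i,
$$
each term being extended by zero outside $\pi^{-1}(V_i)$. Three verifications then complete the argument: (i) local finiteness of the sum, inherited from $\{V_i\}$ via the continuity of $\pi$, so that $\alpha$ is a smooth 1-form on $M$; (ii) the fact that $\alpha$ is a positive contact form for $\mathcal H$, because at every $x\in M$ at least one $\rho_i(\pi(x))>0$ and all nonzero contributions are positive multiples of positive contact forms for $\mathcal H$; (iii) $\mathcal X$-invariance of $\alpha$, which follows because each pullback $\rho_i\circ\pi$ is constant along the fibers of $\pi$, and these fibers are by definition the orbits of $\mathcal X$, so $\mathcal L_Y(\rho_i\circ\pi)=0$ for every $Y\in\mathcal X$, while each $\beta_i$ is $\mathcal X$-invariant by construction.

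The main (and only real) obstacle is the sign-matching step: without co-orientation, positive linear combinations of locally defined contact forms can cancel on overlaps and produce a 1-form with zeros, so nothing would guarantee that the patched $\alpha$ still defines $\mathcal H$ globally. The hypothesis that $\mathcal H$ be co-oriented is exactly what allows the simultaneous choice of signs that turns the partition-of-unity sum into a globally nonvanishing contact form. Once this is arranged, the rest of the argument is routine paracompactness and bookkeeping.
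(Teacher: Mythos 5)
Your proof is correct and follows essentially the same route as the paper: both globalize the local $\mathcal X$-invariant forms from Theorem 3 by pulling back a partition of unity from $W$ (so the cutoffs are fiberwise constant, hence $\mathcal X$-invariant) and using the co-orientation to guarantee that on each chart the sum is a positive multiple of a local contact form. Your explicit sign-normalization step is just a more careful spelling-out of the paper's remark that the transition functions $f_{ij}$ can be taken positive.
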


\begin{proof}  We can cover $W$ by open sets $W_i$ such that
we have contact 1-forms $\alpha_{U_i}$ invariant by $\mathcal X$
 on every
$U_i=\pi^{-1}(W_i)$ (Theorem 3). Let $\bar \lambda_i$ be the
partition of unity subordinate to covering $\{W_i\}$. Since
$\mathcal H$ is oriented, for all nonempty intersections $U_i\cap
U_j$, we have smooth positive functions $a_{ij}$,
$\alpha_{U_i}=f_{ij} \alpha_{U_j}\vert_{U_i\cap U_j}$.

Define the 1-form $\alpha$ by $ \alpha=\sum_i \lambda_i
\alpha_{U_i}$, $\lambda_i=\bar\lambda_i\circ\pi. $ Then, on $U_k$
we have
\begin{equation}\label{a-k}
\alpha=a_k\alpha_{U_k},
\end{equation}
where $a_k=\sum_{i, U_i\cap U_k \ne \emptyset} \lambda_i f_{ki}>0$
is a $\pi$-basic function. Whence $\alpha$ is a contact form that
define $\mathcal H$.

It remains to prove $\mathcal X$-invariance of $\alpha$. Let $X\in
\mathcal X$. Then $\mathcal L_X \lambda_i=0$. Further, by
construction, $X$ preserve all local contact forms $\alpha_{U_i}$.
Thus
$$
\mathcal L_X \alpha=\sum_i(\mathcal L_X
\lambda_i)\alpha_{U_i}+\lambda_i\mathcal L_X\alpha_{U_i}=0.
$$
\end{proof}

Let $Z$ be the Reeb vector field of the globally $\mathcal
X$-invariant contact form $\alpha$. Then, as in \eqref{pomoc3}, we
get $[Z,Y]=0$, $Y\in\mathcal X$. However, it turns out that the
foliation $\mathcal F=\ker\pi_*$ not need to be $\alpha$-complete
since $Z$ not need be a section of $\mathcal F$.

Recall that a contact toric action on a co-oriented contact
manifold $(M,\alpha)$ is of the {\it Reeb type} if the Reeb vector
field corresponds to an element of the Lie algebra of the torus
\cite{BG}. Similarly, we give the following definition.

\begin{definition}{\rm
Let $(M,\alpha)$ be a co-oriented contact manifold with a complete
pre-isotropic contact structure defined by commuting infinitesimal
automorphisms $\mathcal X$ of $\alpha$, such that the associated
Reeb vector field $Z$ is a section of $\mathcal F=\ker\pi_*$. We
refer to a triple $(M,\alpha,\mathcal X)$ with the above property
as a {\it complete pre-isotropic structure of the Reeb
type.}}\end{definition}

\begin{proposition}\label{reeb}
Let  $(M,\alpha,\mathcal X)$ be a complete pre-isotropic structure
of the Reeb type. Then the associated foliation $\mathcal
F=\ker\pi_*$ is $\alpha$-complete.
\end{proposition}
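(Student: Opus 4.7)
The plan is to apply Theorem 1: since the Reeb-type assumption places $Z$ inside $\mathcal F$, the foliation $\mathcal F$ is $\alpha$-complete if and only if the pseudo-orthogonal distribution $\mathcal E=\mathcal F^\perp$ is integrable. Integrability being a local question, I would work in a neighborhood $U$ of an arbitrary point and exhibit $r$ independent integrals of $\mathcal E|_U$ cutting out exactly $\mathcal E|_U$.

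First, I would choose $X_1,\dots,X_r\in\mathcal X$ such that $Z,X_1,\dots,X_r$ is a local frame for $\mathcal F|_U$; this is possible because $\mathcal X$ spans $\mathcal F$ at each point (fibers of $\pi$ are the orbits of $\mathcal X$) and $Z$ is a section of $\mathcal F$ by hypothesis. Set $y_j=i_{X_j}\alpha$. Since $\alpha$ is $\mathcal X$-invariant, $\mathcal L_{X_j}\alpha=0$, so each $X_j$ is an infinitesimal automorphism of $\alpha$ and coincides with the contact Hamiltonian field $X_{y_j}$; in particular $\mathcal L_Z y_j=[1,y_j]=0$, so $dy_j$ is semi-basic.

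Next I would show that the $y_j$ are $\pi$-basic and annihilate $\mathcal E$. The first assertion follows from abelianness of $\mathcal X$: $[X_{y_i},X_{y_j}]=0$ gives $[y_i,y_j]=0$ via the Lie algebra isomorphism $\Phi$, and then \eqref{der} yields $\mathcal L_{X_i}y_j=[y_i,y_j]+y_j\mathcal L_Z y_i=0$, so $y_j$ is constant along $\mathcal F=\ker\pi_*$. For the second, I would write a local section of $\mathcal E$ as $V=cZ+W$ with $W\in\orth_{\mathcal H}\mathcal G$ and use that $\mathcal G$ is spanned by the horizontal parts $\hat X_j$ (as in the proof of Theorem 2(iii)); then, since $\mathcal L_{X_j}\alpha=0$ gives $i_{X_j}d\alpha=-dy_j$, the identity
\[
\mathcal L_V y_j=c\,\mathcal L_Z y_j+dy_j(W)=-i_{X_j}d\alpha(W)=d\alpha(W,\hat X_j)=0
\]
exhibits $y_j$ as an integral of $\mathcal E$.

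To conclude I would invoke a dimension count. The distribution $\mathcal E$ has corank $r$ in $TM$, and the semi-basic $1$-forms $dy_j=-i_{\hat X_j}d\alpha$ are independent because under $\alpha^\flat$ they correspond to the independent horizontal fields $\hat X_j$ spanning $\mathcal G$; hence $\mathcal E|_U=\bigcap_{j=1}^r\ker dy_j$, which is manifestly integrable. Theorem 1 then gives $\alpha$-completeness of $\mathcal F$. I do not foresee a substantial obstacle: the only delicate point is the dual role of the $y_j$, which must be simultaneously $\pi$-basic (using $\mathcal X$-invariance of $\alpha$ plus abelianness) and integrals of $\mathcal E$ (using the Reeb-type assumption via $i_Z d\alpha=0$), so that they qualify as the required local integrals of the pseudo-orthogonal distribution.
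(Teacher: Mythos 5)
Your proof is correct, but it takes a genuinely different route from the paper's. You verify the hypothesis of Libermann's Theorem~1 directly: after completing $Z$ to a local frame $Z,X_1,\dots,X_r$ of $\mathcal F$ by elements of $\mathcal X$, you show that the contact Hamiltonians $y_j=i_{X_j}\alpha$ are $r$ independent integrals of $\mathcal E=\mathcal F^\perp$ that cut it out exactly, so $\mathcal E$ is integrable and Theorem~1 yields $\alpha$-completeness. This is essentially the argument of Theorem~3(ii) transplanted to the globally $\mathcal X$-invariant form $\alpha$, with the Reeb-type hypothesis ($Z$ a section of $\mathcal F$, not necessarily $Z\in\mathcal X$) supplying exactly what that argument needs: the decomposition $\mathcal F^\perp=\mathcal Z\oplus\orth_{\mathcal H}\mathcal G$ from Theorem~2(i), which makes your computation $\mathcal L_V y_j=c\,\mathcal L_Z y_j+d\alpha(W,\hat X_j)=0$ work, and the applicability of Theorem~1. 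The paper argues differently: it invokes the action--angle normal form of Theorem~4, writes $\alpha=\frac{1}{a}\alpha_0$ with $\alpha_0$ the canonical local form, applies Lemma~3 to conclude that the conformal factor $a$ depends only on the action variables, and then checks that the pseudo-orthogonal complements of $\mathcal F$ with respect to $\alpha$ and $\alpha_0$ coincide. Your argument is more elementary and self-contained (no appeal to Theorem~4 or Lemma~3); the paper's argument buys the additional fact that $a=a(y)$, which is reused in the remark following the proposition. If you write yours up, make explicit that pointwise independence of $Z,X_1,\dots,X_r$ propagates to a neighborhood, and that the identification $\mathcal E\vert_U=\bigcap_{j}\ker dy_j$ rests on $\mathcal E$ having constant corank $r$, which is guaranteed by Theorem~2(i).
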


\begin{proof} Locally, every leaf $F$ of $\mathcal F$ has a
$\pi$-invariant neighborhood $U$ with local generalized contact
action-angle coordinates \eqref{action-angle} in which $\mathcal
H$ is represented by the contact form $\alpha_0=\sum_\nu y_\nu
d\theta_\nu+\sum_a g(y,x)dx_a$ and $\mathcal F\vert_U$ is
$\alpha_0$-complete (Theorem 4). We need to prove that $\mathcal
F$ is complete with respect to the contact form $\alpha$ as well.

We have $\alpha\vert_U=\frac{1}{a}\cdot\alpha_0$ for some
nonvanishing function $a: U\to \R$. In what follows, by
$Z^\alpha$, $Z^{\alpha_0}$, $X^\alpha_f$, $X^{\alpha_0}_f$ and
$\Phi_\alpha$, $\Phi_{\alpha_0}$ we denote the Reeb vector fields,
contact Hamiltonian vector fields and the isomorphisms \eqref{iso}
with respect to $\alpha$ and $\alpha_0$, respectively. They are
related by
$$
X^\alpha_f=\Phi^{-1}_\alpha(f)=\Phi^{-1}_{\alpha_0}(af)=
X^{\alpha_0}_{af}, \qquad
Z^\alpha=\Phi_\alpha^{-1}(1)=\Phi_{\alpha_0}^{-1}(a)=X^{\alpha_0}_a
$$
(see Proposition 13.7, \cite{LM}).

On the other hand, by the argument used in \eqref{pomoc3}, with
$\Phi$ replaced by $\Phi_\alpha$, we get $[Z^\alpha,X]=0$,
$X\in\mathcal X$. Therefore we can apply Lemma \ref{lema} with
$Z^\alpha=X_a^{\alpha_0}$ and $\alpha_0$, instead of $X$ and
$\alpha$, concluding that $a$ is a function of actions variables
$y=(y_1,\dots,y_r)$ only.

Let $f$ be an integral of $\mathcal F$. Since $da$ and $df$ are
semi-basic, we get that the contact Hamiltonian vector field
\begin{eqnarray*}
X^\alpha_f &=& \Phi^{-1}_\alpha(f)=\Phi^{-1}_{\alpha_0}(af)\\
&=& (af)Z^{\alpha_0}+\alpha_0^\sharp({adf+fda})=(af)
Z^{\alpha_0}+a\alpha_0^\sharp(df)+f\alpha_0^\sharp(da)\\
&=&af Z^{\alpha_0} +
a(X^{\alpha_0}_f-fZ^{\alpha_0})+f(X^{\alpha_0}_a-aZ^{\alpha_0})\\
&=& aX^{\alpha_0}_f+fZ^{\alpha}-af Z^{\alpha_0},
\end{eqnarray*}
is a section of pseudo-orthogonal complement of $\mathcal F$ with
respect to $\alpha_0$. Thus, the pseudo-orthogonal complements of
$\mathcal F$ with respect to $\alpha$ and $\alpha_0$ coincides.
This completeness the proof. \end{proof}

\begin{remark}{\rm
Let us return to the construction of an invariant contact form
$\alpha$ given in Proposition 1. From the proof of Proposition
\ref{reeb}, we obtain that $\mathcal F=\ker \pi_*$ is
$\alpha$-complete if the functions $a_k$ defined by \eqref{a-k}
depend only on actions variables. If this is not the case, suppose
additionally that the Reeb vector field $Z$ is transversal to
$\mathcal F$ at every point. Then we can consider the foliation
$\tilde{\mathcal F}$ generated by $\mathcal X$ and $Z$. It can be
proved that $\tilde{\mathcal F}$ is $\alpha$-complete. Note that
if $n=p$, i.e, $(M,\mathcal H,\mathcal X)$ is a {regular
completely integrable contact structure}, then $a_k$ depends only
on action variables and $\mathcal F$ is $\alpha$-complete.
}\end{remark}

\subsection{}
Let $(M,\alpha,\mathcal X)$ be a complete pre-isotropic structure
of the Reeb type and assume the fibers of  \eqref{submersion} are
connected. Theorem 4 and Proposition 2 provide that $\pi: M\to W$
is a toric fibration. There is an open covering $W_i$ of $W$ and
local trivializations $\phi_i: U_i=\pi^{-1}(W_i)\to \mathbb
T^{r+1} \times D_i$,
\begin{equation*}\label{action-angle*}
\phi_i(x)=(\theta^i,y^i,x^i)=(\theta^i_0,\theta^i_1,\dots,\theta^i_r,y^i_1,\dots,y^i_r,x^i_1,\dots,x^i_{2s}),
\quad s=n-r,
\end{equation*}
where $D_i\subset \R^p$ is an open set diffeomorphic to $W_i$,
such that
\begin{itemize}
\item[(i)] the fibers of $\pi$ are represented as the level sets
of functions $(y^i,x^i)$, where the action variables $y^i$ are
integrals of the pseudo-orthogonal foliation $\mathcal E=\mathcal
F^\perp$ restricted to $U_i$;

\item[(ii)] $\alpha$ has the following canonical form
\begin{equation*}\label{canonical*}
\alpha_i=(\phi^{-1}_i)^*\alpha=y^i_0d\theta^i_0+y^i_1d\theta^i_1+\dots+y^i_rd\theta^i_r+g^i_1dx^i_1+\dots+g^i_{2s}
dx_{2s}^i,
\end{equation*}
where $y^i_0$ is a smooth function of $y^i$ and $g_a^i$ are
functions of $(y^i,x^i)$.
\end{itemize}

\begin{proposition}
Suppose that the intersection of $W_i$ and $W_j$, i.e., of $U_i$
and $U_j$ is connected. Then on $U_i \cap U_j$ we have the
following transition formulas:
\begin{eqnarray}
&& \theta^j_\nu=\sum_{\mu=0}^r M^{ij}_{\nu\mu}
(\theta_\mu^i+F^{ij}_\mu(y^i,x^i)),\label{prevodjenje-uglovi}\\
&& y^j_\nu=\sum_{\mu=0}^r K^{ij}_{\nu\mu}
y^i_\mu,\label{prevodjenje-y}\qquad
\nu=0,\dots,r,\\
&& x^j_a=X^{ij}_a(y^i,x^i), \qquad
a=1,\dots,2s,\label{prevodjenje-x}
\end{eqnarray} where matrixes
$K^{ij}=(K^{ij}_{\nu\mu})$ and $M^{ij}=(K^{ij}_{\nu\mu})$ belong
to $GL(r+1,\mathbb Z)$, $M=(K^T)^{-1}$, and functions
$X^{ij}_a(y^i,x^i)$, $F^{ij}_\nu(y^i,x^i)$ satisfy
\begin{equation}\label{prevodjenje-F}
g_a^i=\sum_{b=1}^{2s} g^j_b\frac{\partial X^{ij}_b}{\partial
x^i_a},\qquad \sum_{b=1}^{2s} g^j_b\frac{\partial
X^{ij}_b}{\partial y^i_k}+\sum_{\nu=0}^r y^i_\nu \frac{\partial
F^{ij}_\nu}{\partial y^i_k}=0.
\end{equation}
\end{proposition}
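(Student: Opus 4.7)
The plan is to compare the two canonical expressions for $\alpha$ on the connected overlap $U_i\cap U_j$, exploiting the homological interpretation of the action functions (Remark on $y_\nu$) together with the integer-lattice structure that the $\mathcal{X}$-action induces on each invariant torus.

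First, I would derive the $y$-transition \eqref{prevodjenje-y}. Each action admits the period representation $y^i_\nu\vert_T = \frac{1}{2\pi}\int_{\gamma^i_\nu(T)}\alpha$, where $\gamma^i_\nu(T)$ forms a $\mathbb{Z}$-basis of $H_1(T,\mathbb{Z})$ dual to the angular vector fields $\partial/\partial\theta^i_\nu$; similarly for $\gamma^j_\nu(T)$. Since any two such $\mathbb{Z}$-bases differ by an element of $GL(r+1,\mathbb{Z})$, linearity of the integral gives $y^j_\nu = \sum_\mu K^{ij}_{\nu\mu}y^i_\mu$ on every $T$. Because the $y^i_\nu$ are integrals of the pseudo-orthogonal foliation $\mathcal{E}$, the matrix $K^{ij}$ descends to a locally constant function on $\pi(U_i\cap U_j)$ and is therefore constant on the connected overlap. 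Then $(y^i,x^i)$ and $(y^j,x^j)$ being two $\pi$-basic coordinate systems with $y^j$ depending only on $y^i$ forces $x^j_a=X^{ij}_a(y^i,x^i)$, which is \eqref{prevodjenje-x}.

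Next, for the angle transition \eqref{prevodjenje-uglovi}, I would restrict $\alpha$ to a single fibre $T$; the semi-basic piece $\sum_a g^i_a dx^i_a$ vanishes there, leaving $\sum_\nu y^i_\nu\, d\theta^i_\nu\vert_T = \sum_\nu y^j_\nu\, d\theta^j_\nu\vert_T$. Substituting $y^j=K^{ij}y^i$ and pairing with the lattice vector fields $\partial/\partial\theta$ forces $\theta^j_\nu = \sum_\mu M^{ij}_{\nu\mu}\theta^i_\mu + c_\nu(y^i,x^i)$ with $M^{ij}=(K^{ij\,\top})^{-1}$, which automatically lies in $GL(r+1,\mathbb{Z})$ since $\det K^{ij}=\pm 1$. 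Writing $c_\nu = \sum_\mu M^{ij}_{\nu\mu}F^{ij}_\mu$ yields the stated form. Finally I would substitute \eqref{prevodjenje-uglovi}--\eqref{prevodjenje-x} into the expression for $\alpha_j$, expand $d\theta^j_\nu$ and $dx^j_a$ by the chain rule in the basis $\{d\theta^i_\mu,\,dy^i_k,\,dx^i_a\}$, and equate coefficients with $\alpha_i$. The $d\theta^i_\mu$-terms automatically reproduce $K^\top M=I$; the vanishing of the $dy^i_k$-coefficients and the matching of the $dx^i_a$-coefficients with $g^i_a$ produce the pair of identities \eqref{prevodjenje-F}.

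The main obstacle is the integrality claim $K^{ij}\in GL(r+1,\mathbb{Z})$: one must argue that the cycles $\gamma^i_\nu(T)$ are intrinsically a $\mathbb{Z}$-basis of $H_1(T,\mathbb{Z})$ associated to the action-angle construction (rather than merely an $\mathbb{R}$-basis), so that their change-of-basis matrix lies in $GL(r+1,\mathbb{Z})$. This follows from the fact that the flows of $\partial/\partial\theta^i_\nu$ and $\partial/\partial\theta^j_\nu$ generate the same standard $\mathbb{T}^{r+1}$-action on each fibre up to a relabelling by an element of $GL(r+1,\mathbb{Z})$. Once this lattice identification is secured, the rest of the argument reduces to routine chain-rule bookkeeping.
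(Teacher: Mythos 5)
Your proposal is correct and follows the same skeleton as the paper: action integrals over a change of cycle basis give \eqref{prevodjenje-y} with $K^{ij}\in GL(r+1,\mathbb Z)$, constancy from connectedness, an affine angle transition with linear part $(K^{ij\,T})^{-1}$, and then substitution into $\alpha_i=\alpha_j$ to read off \eqref{prevodjenje-F}. The one place you genuinely diverge is the crux step, namely why $\partial/\partial\theta^j_\nu=\sum_\mu K^{ij}_{\nu\mu}\,\partial/\partial\theta^i_\mu$ exactly. The paper contracts with $d\alpha$, using $i_{\partial/\partial\theta_\nu}d\alpha=-dy_\nu$, to see that the difference of the two sides lies in $\ker d\alpha=\mathbb R Z$, and then contracts with $\alpha$ to show the $Z$-component vanishes. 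You instead invoke the period lattice: both families of angle fields are fibrewise-constant combinations of the commuting generators of $\mathcal X$ with $2\pi$-periodic flows, so they are two $\mathbb Z$-bases of the same lattice and hence related by a $GL(r+1,\mathbb Z)$ matrix, which is identified with $K^{ij}$ because the cycles $\gamma_\nu$ are the orbits of these fields. That argument is sound and arguably more conceptual. Be aware, though, that your intermediate step of restricting $\alpha$ to a fibre and ``pairing'' would not suffice on its own: the identity $\sum_\nu y^i_\nu d\theta^i_\nu\vert_T=\sum_\nu y^j_\nu d\theta^j_\nu\vert_T$ gives only $r+1$ scalar relations for the $(r+1)^2$ entries of the Jacobian $\partial\theta^j_\nu/\partial\theta^i_\mu$, and since $y^i_0$ is a function of $y^i_1,\dots,y^i_r$ the vector $y^i$ only sweeps an $r$-dimensional family, so one cannot conclude $J^TK=I$ from it; the lattice argument in your final paragraph (or the paper's $d\alpha$-kernel argument) is what actually carries the proof.
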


\begin{proof} Since $y^i$ and $y^j$ (respectively, $(y^i,x^i)$ and
$(y^j,x^j)$) are integrals of the pseudo-orthogonal foliation
$\mathcal E$ (respectively, of $\mathcal F$) we have:
\begin{equation}\label{pomoc1}
\theta^j_\nu=\Theta^{ij}_\nu(\theta^i,y^i,x^i), \qquad
y^j_k=Y^{ij}_k(y^i), \qquad x^j_a=X^{ij}_a(y^i,x^i),
\end{equation}
$\nu=0,\dots,r, k=1,\dots,r, a=1.\dots,2s$.

Let us fix some invariant torus $T=\pi^{-1}(w_0)$ within $U_i \cap
U_j$ $(w_0\in W_i \cap W_j)$. From \eqref{actions}, we have
$$
y^j_\nu\vert_T=\int_{\gamma^j_\nu(T)} \alpha = \sum_{\mu=0}^r
K^{ij}_{\nu\mu} \int_{\gamma^i_\mu(T)}\alpha=\sum_{\mu=0}^r
K^{ij}_{\nu\mu} y^i_\mu\vert_T,
$$
where $K^{ij}\in GL(r+1,\mathbb Z)$ is a matrix which relates two
different bases of cycles $(\gamma_0^j(T),\dots,\gamma_r^j(T))$
and $(\gamma_0^i(T),\dots,\gamma_r^i(T))$ defined in Remark
\ref{action-remark}. From \eqref{pomoc1} and the connectedness of
$W_i \cap W_j$ the matrix $K^{ij}$ is constant. This proves
\eqref{prevodjenje-y}. Therefore
 $$
 i_{{\partial}/{\partial\theta^j_\nu}}d\alpha=-dy^j_\nu=-\sum_\mu
K^{ij}_{\nu\mu}dy^i_\mu=\sum_\mu K^{ij}_{\nu\mu}
i_{{\partial}/{\partial\theta^i_\mu}}d\alpha,
$$
implying that ${\partial}/{\partial\theta^j_\nu}-\sum_\mu
K^{ij}_{\nu\mu}{\partial}/{\partial\theta^i_\mu}\in\ker d\alpha=\R
Z$.

Let $\lambda Z$ be the difference of
${\partial}/{\partial\theta^j_\nu}$ and $\sum_\mu
K^{ij}_{\nu\mu}{\partial}/{\partial\theta^i_\mu}$. Then
$$
\lambda=\alpha (\lambda
Z)=\alpha({\partial}/{\partial\theta^j_\nu}-\sum_\mu
K^{ij}_{\nu\mu}{\partial}/{\partial\theta^i_\mu})=y_\nu^j-\sum_\mu
K^{ij}_{\nu\mu}y_\mu^i=0.
$$

Thus, from \eqref{pomoc1}, permuting the indexes $i$ and $j$, we
obtain
$$
\frac{\partial}{\partial\theta^j_\nu}=\sum_\mu\frac{\partial
\Theta^{ji}_\mu}{\partial\theta^j_\nu}\frac{\partial}{\partial\theta^i_\mu}=
\sum_\mu K^{ij}_{\nu\mu}\frac{\partial}{\partial\theta^i_\mu},
$$
leading to the fact that $\Theta^{ji}_\mu$ is linear in
$\theta^j_\nu$ and that can be written into a form
$$
\Theta^{ji}_\mu=\sum_\nu
\left(K^{ij}_{\nu\mu}\theta^j_\nu+F^{ji}_\nu(y^j,x^j)\right).
$$
From the above expression we get \eqref{prevodjenje-uglovi}, where
$\sum_{\lambda=0}^r
K_{\lambda\mu}^{ij}M_{\lambda\nu}^{ij}=\delta_{\nu\mu}$.

Replacing \eqref{prevodjenje-y} and the differentials of
\eqref{prevodjenje-uglovi}, \eqref{prevodjenje-x} into the
identity
\begin{equation}\label{pomoc2}
\sum_{\nu=0}^r y^i_\nu d\theta^i_\nu+\sum_{a=1}^{2s}
g^i_a(y^i,x^i)dx^i_a= \sum_{\lambda=0}^r y^j_\lambda
d\theta^j_\lambda+\sum_{b=1}^{2s} g^j_b(y^j,x^j)dx^j_b,
\end{equation}
and compering the terms with $dx^i_a$ and $dy^i_k$ we get
\eqref{prevodjenje-F}. \end{proof}

\subsection{}
The study of toric fibrations within the symplectic geometry
framework is based on the papers of Duistermaat \cite{Du}
(Lagrangian fibration) and Dazord and Delzant \cite{DD} (isotropic
fibrations). On the other side, Banyaga and Molino defined
characteristic invariants of
 {\it regular} and {\it singular} completely integrable contact structures and
 proved a classification theorem: two completely integrable contact structures
 with the same invariants are isomorphic \cite{BM}.
For contact toric actions and singular completely integrable
contact structures, see also \cite{BG, Le} and \cite{Mi},
respectively.

Here we consider the existence of global contact action-angles
coordinates by using the arguments already used in the paper.

The possibility of taking all matrices $K^{ij}$ and $M^{ij}$ equal
to the identity reflects the fact that the fibration by the
invariant tori is a principal $\mathbb T^{r+1}$-bundle. When this
does not happen, it is said that we have nontrivial monodromy
\cite{Du}.

Let $W'\subset W$, $\dim W'=\dim W$ be a connected compact
submanifold (with a smooth boundary) and consider the fibration
$\pi: M'\to W'$, $M'=\pi^{-1}(W')$. It is obvious that the
necessary condition for the existence of global contact
action-angles variables is that $M'\to W'$ is a trivial principal
bundle.

The following sufficient, but not necessary, conditions for $M'\to
W'$ to be trivial are well known  (e.g., see \cite{FS}):

\begin{itemize}
\item[(i)] If $W'$ is simply connected then $\pi: M'\to W'$ is a
principal $\mathbb T^{r+1}$ bundle.

\item[(ii)] In addition, if the second cohomology group
$H^2(W',\mathbb Z)$ vanish then the principal bundle is trivial
and $M'$ is diffeomorphic to $\mathbb T^{r+1}\times W'$.
\end{itemize}

Indeed, if $W'$ is simply connected then the monodromy of the
restricted fibration $\pi: M'\to W'$ is trivial providing that
$\pi: M'\to W'$ is a principal $\mathbb T^{r+1}$ bundle. For the
second assertion, note that the Chern class of $\mathbb
T^{r+1}=U(1)\times\dots U(1)$-bundle is equal to
$$
c=c(L_0\oplus\dots \oplus L_r)=(1+c_1(L_0))\dots (1+c_1(L_r))
$$
where $L_\nu$ is the bundle associated to the $\nu$-th factor
$U(1)$. They are all trivial in the case $H^2(M,\mathbb Z)=0$.
Whence, $\mathbb T^{r+1}$-bundle is also trivial.

Now we can formulate the following statement.

\begin{theorem}[Global contact action-angles variables]
Let $(M,\mathcal \alpha,\mathcal X)$ be a complete pre-isotropic
structure of the Reeb type and  let $W'\subset W$, $\dim W'=\dim
W$ be a connected compact submanifold (with a smooth boundary)
such that

(i) $\pi: M'\to W'$ is a trivial principal $\mathbb T^{r+1}$
bundle, $M'=\pi^{-1}(W')$.

(ii) There exist everywhere independent functions $\bar
x_1,\dots,\bar x_{2s}$ defined is some neighborhood of $W'$
satisfying:
\begin{equation}\label{nezavisnost}
\langle dx_1,\dots,dx_{2s}\rangle \cap \mathcal E^0=0,
\end{equation}
where $x_a=\bar x_a\circ \pi$ and $\mathcal E=\mathcal F^\perp$ is
the pseudo-orthogonal foliation of $\mathcal F$.

Then there exist global action-angle variables
$(\theta_0,\dots,\theta_r,y_0,\dots,y_r)$ and functions $\bar
g_1,\dots, \bar g_{2s}: W'\to\R$ such that the contact form
$\alpha$ on $M'$ reads
\begin{equation}\label{global}
\alpha_0=y_0d\theta_0+\dots+y_r d\theta_r+\pi^*(\bar g_1d\bar
x_1+\dots+\bar g_{2s}d\bar x_{2s}).
\end{equation}
\end{theorem}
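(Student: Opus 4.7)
The plan is to combine the triviality of the principal bundle with the freedom to shift the global angles by base functions, exploiting the Reeb-type hypothesis to make these shifts both explicit and globally defined. Triviality of $\pi:M'\to W'$ as a principal $\mathbb T^{r+1}$-bundle yields global angle coordinates $\theta_0,\dots,\theta_r$ on $M'$ with globally defined generators $Y_\nu=\partial/\partial\theta_\nu$; matching the local Theorem~4 trivializations to this global one forces the $GL(r+1,\mathbb Z)$ transition matrices $K^{ij}$ of Proposition~3 to be the identity, so the local action functions patch to global $\pi$-basic functions $y_\nu=\bar y_\nu\circ\pi$. The globally valid identities $i_{Y_\nu}\alpha=y_\nu$ and $\mathcal L_{Y_\nu}\alpha=0$ imply that $\alpha-\sum_\nu y_\nu d\theta_\nu$ is horizontal and $\mathcal X$-invariant, hence descends through $\pi$ to a 1-form $\beta$ on $W'$. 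Hypothesis~(ii) guarantees that $(\bar y,\bar x)$ are local coordinates on $W'$, so one may write $\beta=\sum_a\bar\phi_a d\bar x_a+\sum_k\bar h_k d\bar y_k$ for globally defined functions $\bar\phi_a,\bar h_k$ on $W'$.

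To bring $\alpha$ into the form \eqref{global}, I would introduce new global angles $\theta'_\nu=\theta_\nu-\bar H_\nu\circ\pi$ chosen to kill the $d\bar y_k$-components. This reduces to solving $\sum_{\nu=0}^r\bar y_\nu\,\partial\bar H_\nu/\partial\bar y_k=-\bar h_k$ for $k=1,\dots,r$, an underdetermined system with $r+1$ unknowns and $r$ equations. Inserting the ansatz $\bar H_k=\bar h_k-\bar H_0\,\partial\bar y_0/\partial\bar y_k$ for $k\ge 1$, a direct calculation collapses the system to $\partial(\mu\bar H_0+\sum_l\bar y_l\bar h_l)/\partial\bar y_k=0$ for all $k$, with $\mu=\bar y_0-\sum_l\bar y_l\,\partial\bar y_0/\partial\bar y_l$. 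This is solved explicitly by $\bar H_0=-\mu^{-1}\sum_l\bar y_l\bar h_l$, and the analysis of the Reeb field in Remark~3 ensures that $\mu$ is nowhere zero on $W'$, so $\bar H_0$ (and hence every $\bar H_\nu$) is globally smooth. The resulting angle shift is globally well-defined, and $\alpha$ takes the canonical form \eqref{global} with coefficients $\bar g_a=\bar\phi_a+\sum_\nu\bar y_\nu\,\partial\bar H_\nu/\partial\bar x_a$.

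The main obstacle is the global solvability of the PDE for the angle shifts. Local solvability is automatic, but the global explicit formula depends crucially on the nowhere-vanishing of the function $\mu$, which is a consequence of the Reeb-type structure through Remark~3. Without this structural input one would be forced to patch local angle shifts through a cohomological argument on~$W'$.
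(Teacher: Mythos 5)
Your first paragraph is sound and matches the paper's opening moves: triviality gives global angles $\varphi_\nu$, the vector fields $Y_\nu=\partial/\partial\varphi_\nu$ preserve $\alpha$, the actions $y_\nu=i_{Y_\nu}\alpha$ are global, $\pi$-basic, redundant integrals of $\mathcal E$, and $\alpha-\sum_\nu y_\nu d\varphi_\nu$ descends to a $1$-form $\beta$ on $W'$. The gap appears immediately after, when you write $\beta=\sum_a\bar\phi_a d\bar x_a+\sum_{k=1}^r\bar h_k d\bar y_k$ with \emph{globally} defined $\bar h_k$, and then define $\mu=\bar y_0-\sum_l\bar y_l\,\partial\bar y_0/\partial\bar y_l$ on all of $W'$. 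Both steps presuppose that $(\bar y_1,\dots,\bar y_r,\bar x)$ is a single global coordinate system on $W'$, equivalently that $d\bar y_1,\dots,d\bar y_r$ frame the rank-$r$ bundle $\mathcal E^0$ everywhere and that $\bar y_0$ is globally a function of $(\bar y_1,\dots,\bar y_r)$. Hypothesis (ii) does not give this: it only guarantees that the $d\bar x_a$ are transverse to $\mathcal E^0$, so that \emph{locally} some $r$-element subcollection of the $r+1$ redundant actions $\bar y_0,\dots,\bar y_r$ completes $\bar x$ to a chart — and which subcollection works can change from chart to chart (this is exactly how the paper sets up its covering $\{W_i\}$, with a chart-dependent choice $\bar y^i$). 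The single relation $\bar z_0\,d\bar y_0+\dots+\bar z_r\,d\bar y_r=0$ allows $\bar z_0$ to vanish on part of $W'$, at which points $d\bar y_1,\dots,d\bar y_r$ become dependent, the functions $\bar h_k$ cease to be well defined, and your $\mu$ (which equals $\pm 1/\bar z_0$) blows up. The appeal to Remark 3 is a local statement valid only in a chart where the indices have been permuted so that $\bar y_1,\dots,\bar y_r$ are independent; it cannot be invoked globally on $W'$ with a fixed labelling.

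This is precisely the difficulty the paper's proof is organized around: it keeps all $r+1$ redundant differentials, writes the $\mathcal E^0$-component of $\beta$ locally as $\sum_k\bar f^i_k\,d\bar y^i_k$ in chart-dependent subcollections, and then uses the gauge freedom coming from the constraint $\sum_\nu\bar z_\nu d\bar y_\nu=0$ together with the patching Lemma 7, applied along a filtration $V_1\subset\dots\subset V_N=W'$, to produce global coefficients $\bar f_0,\dots,\bar f_r$ with $\sum_{\nu=0}^r\bar f_\nu d\bar y_\nu$ equal to the local expressions. After the global angle shift $\theta_\nu=\varphi_\nu-f_\nu$ the leftover is the exact form $d\bigl(\sum_\nu y_\nu f_\nu\bigr)$, which is removed by a Moser deformation. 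Your idea of choosing the shift so as to kill the $d\bar y$-part exactly (avoiding Moser) is attractive and your algebra for it checks out \emph{within one chart}, but without first solving the global patching problem — by Lemma 7, or at least by a partition-of-unity construction of a global redundant expression $\sum_{\nu=0}^r\bar h_\nu d\bar y_\nu$ followed by an argument that does not single out the index $0$ — the construction does not close up globally. As written, the proof only establishes the theorem under the unstated extra hypothesis that $\bar z_0\neq 0$ on all of $W'$.
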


\begin{remark}{\rm  Proposition 3 and Theorem 6 are contact analogues of Proposition 1 and Theorem 2' in
Nehoroshev \cite{N}, respectively. In Theorem 2' \cite{N}, instead
of the condition (i), the condition that $W'$ is a
simply-connected manifold with vanishing of the second cohomology
class $H^2(W',\mathbb R)$ is used. A variant of the statement with
noncompact invariant manifolds is proved in
\cite{FS}.}\end{remark}

\begin{proof} Since $\pi: M'\to W'$ is a trivial principal $\mathbb
T^{r+1}$ bundle, there exist global angles variables
$(\varphi_1,\dots,\varphi_r)$. Repeating the arguments used in the
proof of Theorem 4, we get that the coordinate vector fields
$Y_\nu={\partial}/{\partial \varphi_\nu}$ preserve $\alpha$ and we
can define actions as their contact Hamiltonians:
$$
y_{\nu}=\Phi(Y_\nu)=i_{Y_\nu}\alpha: \quad M'\to \mathbb R, \qquad
\nu=0,\dots,r.
$$

They are redundant integrals of the pseudo-orthogonal foliation
that satisfy relations \eqref{z}, where $z_\nu$ are the components
of the Reeb vector field $Z$ with respect to vector fields
$Y_\nu$. The functions $y_\nu$ are $\pi$-basic and let $\bar
y_\nu$ be the corresponding functions on $W'$, $y_\nu=\bar y_\nu
\circ\pi$. They are subjected to the constrains
\begin{equation}\label{z*}
\bar z_0 \bar y_0+\dots+\bar z_r y_r=1, \qquad \bar z_0 d\bar
y_0+\dots+\bar z_r d\bar y_y=0,
\end{equation}
where $z_\nu=\pi\circ\bar z_\nu$.

Moreover, according to the assumption \eqref{nezavisnost}, in a
neighborhood of any point $w_0\in W'$, we can take $r$ independent
functions among $\bar y_\nu$ that are independent of $\bar
x_1,\dots,\bar x_{2s}$ providing a local coordinate chart.

Let $\{W_i\}$ be a finite covering of $W'$ such that on every
$W_i$ we can take local coordinates $(\bar y^i,\bar x)$, where
$(\bar y^i_1,\dots,\bar y^i_r)$ is a subcollection of redundant
actions $(\bar y_0,\dots,\bar y_r)$.

As in Theorem 4 we get that the contact form in
$U_i=\pi^{-1}(W_i)$ reads
\begin{equation}\label{semi-global}
\alpha^i=\alpha_\theta+\pi^*\alpha_y^i+\pi^*\alpha_x^i
\end{equation}
where $ \alpha_\theta=\sum_{\nu=0}^r y_\nu d\varphi_\nu$, $
\alpha^i_y=\sum_{k=1}^r \bar f^i_k(\bar y^i,\bar x)d\bar y^i_k$,
$\alpha^i_x=\sum_{a=1}^{2s} \bar g^i_a(\bar y^i,\bar x)d\bar
x_{a}. $

Thus, on $M'$ we have a unique decomposition $
\alpha=\alpha_\theta+\pi^*\alpha_y+\pi^*\alpha_x$, locally given
by \eqref{semi-global}. It is obvious that we can write $\alpha_x$
as $ \alpha_x=\sum_{a=1}^{2s} \bar g_a d \bar x_{a}, $ where $\bar
g_a: W'\to \R$.

Next, consider the filtration
$$
V_1=W_1 \subset V_2=W_1 \cup W_2 \subset \dots \subset V_N=W_1
\cup\dots\cup W_N=W'.
$$

Applying  Lemma \ref{LEMA} given below $(N-1)$ times we obtain
functions $\bar f_0, \dots, \bar f_r: W'\to\R$, satisfying the
identities
$$
\bar f_0d\bar y_0+\bar f_1 d\bar y_1+\dots+\bar f_rd\bar y_r=\bar
f^i_1d\bar y^i_1+\dots \bar f_rd\bar y^i_r
$$
on every $W_i$.

Therefore, after globally defined transformation
\begin{equation*}\label{translation*}
(\theta_0,\theta_1,\dots,\theta_r)=(\varphi_0-f_0,\varphi_1-f_1,\dots,\varphi_r-f_r),
\qquad f_\nu=\bar f_\nu\circ \pi,
\end{equation*}
the form $\alpha$ becomes
\begin{equation*}\label{semi-global*}
\alpha=\sum_{\nu=0}^r y_\nu d\theta_\nu+\pi^*\sum_{a=1}^{2s}\bar
g_a d \bar x_{a}+df,
\end{equation*}
where $f=\sum_\nu y_\nu f_\nu$ is a $\pi$-basic function. Now, as
in Theorem 4, applying Moser's deformation for a compact manifold
$M'$ and family of forms $\alpha_t=\alpha_0+tdf$ we get the
required statement. \end{proof}

\begin{lemma}\label{LEMA} Suppose that on  $W'$ we have
an open set $U$ with local coordinates $(\bar y_1,\dots,\bar y_r)$
and an open set $V$ endowed with $1$-forms
$$
\gamma_U=F_1 d\bar y_1+\dots+F_\nu d\bar y_r, \qquad \gamma_V=G_1
d\bar y_0+\dots+G_\nu d\bar y_r,
$$
that ere equal on the intersection $U\cap V$. Then there exist
functions $E_0,\dots,E_r$ defined on $U \cup V$ satisfying
$$
\gamma_U=E_0 d\bar y_0+\dots+E_r d\bar y_r\vert_U, \qquad
\gamma_V=E_0 d\bar y_0+\dots+E_r d\bar y_r\vert_V.
$$
\end{lemma}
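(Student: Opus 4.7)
The plan is a standard partition-of-unity gluing argument on the cover $\{U,V\}$ of $U \cup V$. The preparatory move is to express both 1-forms in the common formal frame $d\bar y_0, d\bar y_1, \ldots, d\bar y_r$: since $\gamma_U$ carries no $d\bar y_0$ term, I would adopt the convention $F_0 \equiv 0$ on $U$, so that
$$
\gamma_U = \sum_{\nu=0}^{r} F_\nu\, d\bar y_\nu, \qquad \gamma_V = \sum_{\nu=0}^{r} G_\nu\, d\bar y_\nu.
$$
This formal frame is redundant (by \eqref{z*} one has $\sum_\nu \bar z_\nu\, d\bar y_\nu = 0$), but the lemma asks only for the existence of coefficients $E_\nu$, not their uniqueness, so the redundancy poses no obstacle.

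Next, I would pick a smooth partition of unity $\{\lambda_U, \lambda_V\}$ subordinate to $\{U, V\}$ on $U \cup V$, with $\operatorname{supp}\lambda_U$ closed in $U$ and $\operatorname{supp}\lambda_V$ closed in $V$, satisfying $\lambda_U + \lambda_V \equiv 1$ on $U \cup V$. Then I would define
$$
E_\nu = \lambda_U F_\nu + \lambda_V G_\nu, \qquad \nu = 0, 1, \ldots, r,
$$
where each summand is extended by zero outside its respective support. Each $E_\nu$ is smooth on all of $U \cup V$ because the extensions by zero are valid across the boundaries of $U$ and $V$.

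To conclude, observe that $\sum_\nu E_\nu\, d\bar y_\nu = \lambda_U \gamma_U + \lambda_V \gamma_V$ (with the same extension-by-zero convention). On $U \setminus V$ one has $\lambda_U \equiv 1$ and $\lambda_V \equiv 0$, so the combination equals $\gamma_U$; symmetrically on $V \setminus U$ it equals $\gamma_V$; and on the overlap $U \cap V$ the hypothesis $\gamma_U = \gamma_V$ collapses the combination to $(\lambda_U + \lambda_V)\gamma_U = \gamma_U = \gamma_V$. Hence $\sum_\nu E_\nu\, d\bar y_\nu$ restricts to $\gamma_U$ on $U$ and to $\gamma_V$ on $V$, as required.

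I do not anticipate any genuine obstacle: the argument is essentially formal and uses only the compatibility $\gamma_U = \gamma_V$ on $U \cap V$ together with the existence of a partition of unity. The only mild point worth watching is that the globally smooth character of $\lambda_U F_\nu$ (originally defined only on $U$) as a function on $U \cup V$ depends on the support condition $\operatorname{supp}\lambda_U \subset U$, and symmetrically for $\lambda_V G_\nu$. This lemma is then applied iteratively along the filtration $V_1 \subset V_2 \subset \cdots \subset V_N = W'$ in the proof of Theorem 6 to manufacture the desired global coefficients $\bar f_0, \ldots, \bar f_r$.
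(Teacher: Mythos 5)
Your proof is correct, but it is genuinely different from the one in the paper. The paper does not use a partition of unity: it exploits the redundancy relation \eqref{z*}, $\bar z_0\,d\bar y_0+\dots+\bar z_r\,d\bar y_r=0$, together with the fact that $\bar z_0\neq 0$ on $U$ (Remark \ref{REEB}), to find a single function $A=G_0/\bar z_0$ on $U\cap V$ such that adding $A\cdot(\bar z_0,\dots,\bar z_r)$ to $(0,F_1,\dots,F_r)$ reproduces $(G_0,\dots,G_r)$ exactly on the overlap; it then extends $A$ to $U$ and glues, so that the new coefficients restrict to $G_\nu$ on all of $V$ and differ from the old ones on $U$ only by a multiple of the null covector. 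That version is tailored to the iteration along the filtration $V_1\subset\dots\subset V_N$, since at each step the coefficients already constructed on $V_k$ are preserved verbatim; its price is the reliance on $\bar z_0\neq 0$ and the compatibility computation \eqref{saglasnost}, plus a somewhat glossed-over ``arbitrary extension of $A$ from $U\cap V$ to $U$'' (which, to be rigorous, itself needs a cutoff, i.e.\ essentially your device). Your partition-of-unity argument buys more: it uses only the hypothesis $\gamma_U=\gamma_V$ on the overlap, needs neither \eqref{z*} nor $\bar z_0\neq 0$, and is watertight on the smoothness of the glued coefficients. It does not preserve the $V$-side coefficients pointwise, only the $1$-form they represent --- but that is all the lemma asserts and all that Theorem 6 uses (only $\sum_\nu \bar f_\nu\,d\bar y_\nu$ enters the change of angles and the Moser deformation). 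Indeed, your argument would let one dispense with the filtration entirely and glue over the whole cover $\{W_i\}$ in one step.
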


\begin{proof} The statement is trivial if $U\cap V=\emptyset$.
Assume $U\cap V \ne \emptyset$. According to the constraints
\eqref{z*}, the form $\gamma_U$ does not change under the addition
of terms proportional to $\bar z_0 d\bar y_0+\dots+\bar z_r d\bar
y_y$. We are looking for a function $A: U\to \mathbb R$ that
satisfies
\begin{equation}\label{jednacine}
A\bar z_0=G_0,\quad  F_1+A\bar z_1=G_1, \quad \dots \quad
F_r+A\bar z_r=G_r
\end{equation}
on $U\cap V$. Although it is overdetermined system, due to the
condition that $\gamma_U=\gamma_V$ it has an unique solution.
Indeed, on $U$ we have $\bar z_0\ne 0$ and ${\partial \bar
y_0}/{\partial \bar y_i}=-\bar z_i/\bar z_0$ (see Remark
\ref{REEB}). Therefore, the equality $\gamma_U=\gamma_V$ implies
the following compatibility conditions
\begin{equation}\label{saglasnost}
F_1=-G_0\frac{\bar z_1}{\bar z_0}+G_1,\dots, F_r=-G_0\frac{\bar
z_r}{\bar z_0}+G_r, \qquad \bar y\in U\cap V.
\end{equation}

From \eqref{saglasnost}, we obtain that $A=G_0/\bar z_0: U \cap V
\to \mathbb R$ is a  solution of \eqref{jednacine}. Now we take an
arbitrary extension of  $A$ from $U\cap V$ to $U$ and define
$$
E_0\vert U=A, \, E_0\vert_V=G_0, \quad E_i=F_i+A\bar z_i\vert_U,\,
E_i=G_i\vert_V, \quad i=1,\dots,r.
$$
\end{proof}

\subsection*{Acknowledgments}  This research was supported by the
Serbian Ministry of Science Project 174020, Geometry and Topology
of Manifolds, Classical Mechanics and Integrable Dynamical
Systems. The part of the paper is written during author's visiting
SISSA in July 2010. Author would like to thanks Professor Dubrovin
for kind hospitality.

\end{document}